\theoremstyle{plain}
\newtheorem{theorem}{Theorem}[section]
\newtheorem{lemma}[theorem]{Lemma}
\newtheorem{proposition}[theorem]{Proposition}
\newtheorem{corollary}[theorem]{Corollary}
\theoremstyle{definition}
\newtheorem{definition}[theorem]{Definition}
\newtheorem{example}[theorem]{Example}
\newtheorem{remark}[theorem]{Remark}
\newtheorem{question}[theorem]{Question}
\theoremstyle{remark}
\newcommand{\Z}{\mathbb{Z}}
\newcommand{\F}{\mathbb{F}}
\title[Shift-invariant almost liftings]{Shift-invariant transformations and almost liftings}
\author[Haugland]{Jan Kristian Haugland}
\address{Norwegian National Security Authority (NSM)\\Norway}
\email{admin@neutreeko.net}
\author[Omland]{Tron Omland}
\address{Norwegian National Security Authority (NSM) \and Department of Mathematics, University of Oslo\\Norway}
\email{tron.omland@gmail.com}
\date{November 3, 2025}
\begin{document}
	
	\begin{abstract} 
		We investigate shift-invariant transformations, also known as rotation-symmetric vectorial Boolean functions, on $n$~bits that are induced from Boolean functions on $k$~bits, for $k\leq n$. We consider such transformations that are not necessarily permutations, but are, in some sense, almost bijective, and study their cryptographic properties.
        
        In this context, we define an almost lifting as a Boolean function for which there is an upper bound on the number of collisions of its induced transformation that does not depend on $n$. We show that if a Boolean function with diameter $k$ is an almost lifting, then the maximum number of collisions of its induced transformation is $2^{k-1}$ for any $n$.
		
		Moreover, we search for functions in the class of almost liftings that have good cryptographic properties and for which the non-bijectivity does not cause major security weaknesses.
		
		These functions generalize the well-known map $\chi$ used in the Keccak hash function.
	\end{abstract}
	
	\maketitle
	
	\section{Introduction}
	
	In symmetric cryptography, the ciphers often consist of linear and nonlinear operations in layers, where the nonlinear part is determined by a so-called S-box, short for ``substitution box'', which is a permutation on the set $\F_2^n$ of $n$-bit vectors. All the substitution-permutation networks are of this type, including the current block cipher standard, AES, and the S-boxes are fundamental in increasing confusion and diffusion to such ciphers.
	Moreover, lookup tables typically have large implementation costs, so good candidates for S-boxes are permutations with an easy description and good cryptographic properties. Shift-invariant bijections (also known as rotation-symmetric permutations) have been shown to be useful in this context, e.g., in lightweight cryptography. The potential of symmetric structure is already illustrated by the $\chi$ function used in Keccak.
	
	In this paper, we relax the bijectivity requirement on the nonlinear layer and allow some collisions. In particular, we study ``near-permutation S-boxes'' that are ``almost bijective'' shift-invariant transformations $\F_2^n \to \F_2^n$ induced from Boolean functions. This is motivated by use cases where the inverse is not needed, e.g., certain modes of operation of a block cipher, constructions such as CBC-MAC and other pseudorandom functions, and then especially where symmetry simplifies hardware or software implementation. In such cases, one needs to investigate whether collisions due to non-invertibility form a threat to security.   A systematic study and exploration of almost bijective shift-invariant S-boxes with compact rules will allow greater flexibility in cryptographic design.
    To pursue this approach, we first need to discuss what ``almost bijective'' should mean, e.g., two natural properties to demand are that the image should be large and that the number of collisions should be small.

    Henceforth, we will use the term S-box also for functions $\F_2^n \to \F_2^n$ that are not necessarily permutations.
	Let $F\colon\F_2^n\to\F_2^n$ be an S-box and $\sigma$ be the right shift, that is, $\sigma(x_1,x_2,\ldots,x_n)=(x_n,x_1,\ldots,x_{n-1})$.
	Then $F$ is shift-invariant (or rotation-symmetric) if $F\circ \sigma=\sigma\circ F$, and $F$ is then completely determined by a Boolean function $f\colon\F_2^n\to\F_2$. 
    Conversely, any Boolean function $f$ on $k$~bits determines a shift-invariant S-box $F$ on $n$~bits, for $n\geq k$, by
 \[
F(x_1,x_2,\dotsc,x_{n})=\big(f(x_1,x_2,\dotsc,x_{k}),f(x_2,x_3,\dotsc,x_{k+1}), \dotsc,f(x_n,x_1,\dotsc,x_{k-1})\big).
\]
    The compact description implies that shift-invariant S-boxes with sufficiently good cryptographic properties are candidates to be used as primitives in implementation-efficient symmetric ciphers.
	The motivating example is the function $\chi(x_1,x_2,x_3)=x_1\oplus (1\oplus x_2)x_3$, first studied in Daemen's thesis~\cite{JDA-thesis}. The function $\chi$ gives rise to bijections for all odd $n\geq 3$ with good cryptographic properties and is used in the hash function Keccak \cite{keccak}, but it is also interesting to look at the non-bijective case, for even $n$.
	
	Examples of good cryptographic properties are: no differentials with high differential probability, no linear approximations with high linear potential. For implementation, we want low computational complexity and as much symmetry as we can get.
	A low algebraic degree is good for protection against side-channel attacks by means of masking, while a high algebraic degree is good for protection against higher order differential attacks. A dense algebraic normal form protects better against integral attacks, but relatively sparse ones can be compensated for by taking a linear layer with large diffusion. Moreover, some desirable properties for almost bijectivity could be:
 	\begin{itemize}
	\item[(P1)] the number of collisions $\max_y \lvert F^{-1}(y)\rvert$ should be low,
	\item[(P2)] (size of the image of $F$)/(size of the codomain of $F$) should be high,
	\item[(P3)] the image $\operatorname{Im}(F)$ and its complement should be unstructured in $\F_2^n$.
	\end{itemize}

	More concretely, we search for Boolean functions on up to five bits with simple descriptions that induce S-boxes with good cryptographic properties. We believe that non-bijective shift-invariant S-boxes will be useful in the applications mentioned above. 
	
	Shift-invariant S-boxes can be extended to arbitrarily large dimensions and viewed as cellular automata, which are certain dynamical systems on the space of infinite binary strings indexed by $\Z$, thought of as cells, where the state of a cell at the next time step is determined by an update rule depending on a finite number of neighboring cells and uniformly applied to all cells at the same time, see e.g.~\cite{Mariot-24,mariot2021evolutionary, mariot2019cellular}.
	Cellular automata that are reversible correspond to shift-invariant S-boxes that are bijective in all dimensions, so the almost liftings we consider in this paper correspond to ``almost reversible'' cellular automata, which actually coincide with those that are surjective \cite{hedlund}. These are less studied, but still have applications in physics and biology, typically for simulation of microsystems that exhibit non-equilibrium behavior and history-dependent dynamics.
	
	Even though shift-invariant S-boxes (or cellular automata) can be described by simple rules, finding the ones that are bijective is difficult, but previous works and computational data indicate that there are still a lot of examples (see e.g., \cite[Appendix~A]{JDA-thesis} and \cite{OS-iacr}). The same applies to the almost bijective case, where few families are described.
	
	In this paper, for a shift-invariant function $F$ induced from a Boolean function $f$, as described above, we first discuss when $f$ is what we call a potential lifting in Section~\ref{sec-potential}. The purpose is to reduce the search space, when looking for functions with desirable properties. We provide some tables in the appendix for the number of such functions, which is also helpful when trying to find proper liftings, that is, functions for which the induced $F$ is bijective.
	
	Further, in Section~\ref{sec-almost}, we introduce almost liftings as Boolean functions for which there is an upper bound for the number of collisions of its induced functions that does not depend on $n$. We then prove Theorem~\ref{main-theorem}, stating that if a Boolean function with diameter $k$ is an almost lifting, then the maximum number of collisions of its induced functions is $2^{k-1}$ for any $n$. This means that all functions we consider will satisfy property (P1) in the above list, or at least that we have some control of the number of collisions.
	
	Our Proposition~\ref{virtual} combined with computer experiments provides a conjecture for what the best possible values for (P2) are. The Boolean functions giving rise to these values will be called virtual liftings and we give a complete list of such functions for $k\leq 5$. Property (P3) may be hard to achieve, and in practice, it can be taken care of by carefully designing the linear layer.
	
	In Section~\ref{sec-selection} we choose a selection of functions that are potentially applicable in symmetric ciphers, and compute various cryptographic properties for these functions. It is not clear that our selection is the best one, and there are probably other properties that come into play as well. In other words, there is more investigation left for future work.

    \subsection*{Our contribution}

We introduce a class of Boolean functions, called almost liftings, for use in design of symmetric primitives where low implementation cost is prioritized over full invertibility. Almost liftings define transformations on $n$~bits defined by a local rule of diameter~$k$ with few collisions and provide the framework for studying the use of rotation-symmetry in ``near-permutation-based cryptography''. We formalize this concept and give a precise description of the class of such functions and also subclasses of specific interest, and connect our findings to results from cellular automata theory. We prove several theoretical results about almost liftings and then run computer experiments to find functions with good cryptographic properties.

    \subsection*{Terminology}

A function $F\colon\F_2^n\to\F_2^n$ that commutes with the right shift, i.e., $F\circ\sigma=\sigma\circ F$ is often called ``rotation-symmetric'' in cryptography. In this paper, we instead use the word ``shift-invariant'', inspired by \cite[Section~6]{JDA-thesis}, matching the tradition from cellular automata, and also used in cryptography, e.g., \cite[Section~2.1]{mariot2021evolutionary}. Moreover, for maps $X\to X$ we use the words ``transformation'' and ``function'' interchangeably, and the same goes for ``permutation'' and ``bijection''. Finally, we remark that the Boolean functions $f$ used in these constructions are themselves not rotation-symmetric.

\subsection*{Acknowledgements}
The authors would like to thank Sondre Rønjom for suggesting this direction of research, and Joan Daemen for email communication that has been of great help, and finally to the anonymous referees for useful comments on the manuscript. The second author was partially supported by the Research Council of Norway (RCN), project no. 345433.

	\section{Potential liftings}\label{sec-potential}
	
	Let $f\colon\F_2^k\to\F_2$ be a Boolean function.
	The diameter of $f$ is the length of the consecutive input sequence that the values of $f$ depend on. If $1\leq i\leq j\leq k$ are such that $i$ and $j$ is the smallest and largest number, respectively, such that $f$ depends on $x_i$ and $x_j$, then its diameter is $j-i+1$. If $f$ depends on both $x_1$ and $x_k$, then its diameter is $k$.
	
	For every $n\geq k$ we say that $f$ is a proper $(k,n)$-lifting if the diameter of $f$ is $k$ and $F\colon \F_2^n\to\F_2^n$ defined by
	\[
	F(x_1,\dotsc,x_n)= \Big( f(x_1,x_2,\dotsc,x_k),
	f(x_2,x_3,\dotsc,x_{k+1}),\dotsc,
	f(x_n,x_1,\dotsc, x_{k-1}) \Big)
	\]
	is a bijection. Note the discrepancy between this definition and the one from \cite{OS-iacr}, where it is not required that the diameter is equal to $k$. The reason for assuming full diameter is only a matter of presentation. All of the arguments hold also without this requirement.

	\begin{question}
		Problems concerning bijectivity of the induced functions are generally hard, and although it will not be the main focus of this paper, we list a few of them:
  \begin{itemize}
	\item[(i)] For a given $f\colon\F_2^k\to\F_2$, find the set $\{n\geq k \mid F\colon\F_2^n\to\F_2^n \text{ is bijective} \}$.
	\item[(ii)] For a given a pair $(k,n)$, find all $f\colon\F_2^k\to\F_2$ that induce bijections $F\colon\F_2^n\to\F_2^n$.
	\item[(iii)] Find all functions $f\colon\F_2^k\to\F_2$ that induce bijections $F\colon\F_2^n\to\F_2^n$ for every $n\geq k$.
 \end{itemize}
\end{question}
 
	For every $m\geq k$ define $F_{(m)}\colon\F_2^m\to\F_2^{m-k+1}$ to be the induced function of $f$ that does not wrap around, i.e., with nonperiodic boundary conditions, that is,
	\[
	F_{(m)}(x_1,\dotsc,x_m)= \Big( f(x_1,x_2,\dotsc,x_k),
	f(x_2,\dotsc,x_{k+1}),\dotsc,
	f(x_{m-k+1},\dotsc, x_m) \Big).
	\]
	As usual, we say that $F_{(m)}$ is \emph{balanced} if for all $y\in\F_2^{m-k+1}$
	\[
	\lvert F_{(m)}^{-1}(y)\rvert = 2^{k-1}.
	\]
	
	\begin{lemma}\label{uniform-distribution}
		If $f$ is a $(k,n)$-lifting then $F_{(m)}$ is balanced whenever $k\leq m\leq n$.
	\end{lemma}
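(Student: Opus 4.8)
The plan is to exploit the fact that the non-periodic map $F_{(m)}$ appears literally as an initial segment of the periodic bijection $F$ on $n$ bits, and then to count a single preimage set in two different ways.

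First I would record the key structural observation. Since $m\le n$, the first $m-k+1$ output coordinates of $F(x_1,\dotsc,x_n)$, namely $f(x_1,\dotsc,x_k),f(x_2,\dotsc,x_{k+1}),\dotsc,f(x_{m-k+1},\dotsc,x_m)$, never reach the wrap-around: the largest input index they use is $m\le n$. Consequently these coordinates depend only on $x_1,\dotsc,x_m$, and they are exactly the output of $F_{(m)}(x_1,\dotsc,x_m)$. In other words, reading off the first $m-k+1$ coordinates of $F$ from the first $m$ inputs reproduces $F_{(m)}$.

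Next, I would fix a target $y\in\F_2^{m-k+1}$ and set $A=F_{(m)}^{-1}(y)\subseteq\F_2^m$; the goal is $\lvert A\rvert=2^{k-1}$. Consider the set $B\subseteq\F_2^n$ of all $(x_1,\dotsc,x_n)$ whose image under $F$ has its first $m-k+1$ coordinates equal to $y$. By the structural observation, $(x_1,\dotsc,x_n)\in B$ if and only if $(x_1,\dotsc,x_m)\in A$, with the remaining coordinates $x_{m+1},\dotsc,x_n$ completely free; hence $\lvert B\rvert=\lvert A\rvert\cdot 2^{n-m}$. On the other hand, $B$ is precisely the $F$-preimage of the cylinder set $\{z\in\F_2^n : z_i=y_i \text{ for } 1\le i\le m-k+1\}$. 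Since $f$ is a $(k,n)$-lifting, $F$ is a bijection, so $\lvert B\rvert$ equals the size of that cylinder set, namely $2^{n-(m-k+1)}$. Equating the two counts yields $\lvert A\rvert\cdot 2^{n-m}=2^{n-m+k-1}$, whence $\lvert A\rvert=2^{k-1}$. As $y$ was arbitrary, $F_{(m)}$ is balanced.

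The only thing to watch is the index bookkeeping: one must confirm that for $k\le m\le n$ the first $m-k+1$ coordinates of $F$ genuinely avoid the wrap-around and that $1\le m-k+1\le n$, both of which follow at once from the hypotheses. So I do not expect any real analytic obstacle here — the entire content is that bijectivity of the periodic $F$ forces every cylinder to have its expected preimage count, and this descends directly to the uniform fiber size of $F_{(m)}$.
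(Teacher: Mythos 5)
Your argument is correct and is essentially the paper's own proof: both introduce the cylinder set $Y=\{z\in\F_2^n : z=(y,y')\}$, observe that its $F$-preimage is $F_{(m)}^{-1}(y)\times\F_2^{n-m}$, and use bijectivity of $F$ to equate $\lvert F^{-1}(Y)\rvert$ with $\lvert Y\rvert=2^{n-(m-k+1)}$. Your extra care about the wrap-around indices is a useful sanity check but does not change the argument.
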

	
	\begin{proof}
		Let $m\geq k$, pick $y\in\F_2^{m-k+1}$, and set
		\[
		Y=\{ z\in\F_2^n : z=(y,y') \text{ for some } y'\in\F_2^{n-(m-k+1)} \}.
		\]
		Then $F_{(m)}(x)=y$ if and only if $F(x,x')\in Y$ for every $x'\in\F_2^{n-m}$, so
		\[
		\lvert F_{(m)}^{-1}(y)\rvert=\frac{\lvert F^{-1}(Y)\rvert}{2^{n-m}}=\frac{\lvert Y\rvert}{2^{n-m}}=\frac{2^{n-(m-k+1)}}{2^{n-m}}=2^{k-1},
		\]
		where the second equality follows by bijectivity of $F$.
	\end{proof}
	
	\begin{definition}
		A Boolean function $f\colon\F_2^k\to\F_2$ of diameter $k$ is called a \emph{potential $(k,n)$-lifting} if $F_{(m)}$ is balanced for every $m$ such that $k\leq m\leq n$.
	\end{definition}
	
	\begin{corollary}
		If $k\leq n\leq n'$ and $f$ is a potential $(k,n')$-lifting, then $f$ is also a potential $(k,n)$-lifting.
		
		If $k\leq m\leq m'$ and $F_{(m')}$ is balanced, then $F_{(m)}$ is balanced.
	\end{corollary}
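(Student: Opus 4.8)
The plan is to prove the second statement first, since the first follows from it (and in fact already from the definition). For the second statement I would reuse the marginalization idea from the proof of Lemma~\ref{uniform-distribution}, now applied directly to the non-wrapping functions. Fix $y\in\F_2^{m-k+1}$ and introduce the ``cylinder'' $Y=\{z\in\F_2^{m'-k+1} : z=(y,y') \text{ for some } y'\in\F_2^{m'-m}\}$ consisting of all outputs of $F_{(m')}$ whose first $m-k+1$ coordinates are $y$; here the tail has length $(m'-k+1)-(m-k+1)=m'-m$, so $\lvert Y\rvert = 2^{m'-m}$. The key observation is that the first $m-k+1$ output coordinates of $F_{(m')}(x_1,\dots,x_{m'})$, namely $f(x_1,\dots,x_k),\dots,f(x_{m-k+1},\dots,x_m)$, depend only on $x_1,\dots,x_m$ and coincide exactly with $F_{(m)}(x_1,\dots,x_m)$. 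Hence $F_{(m')}(x)\in Y$ if and only if $F_{(m)}(x_1,\dots,x_m)=y$, with the tail $x_{m+1},\dots,x_{m'}$ left completely free.

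Next I would count $\lvert F_{(m')}^{-1}(Y)\rvert$ in two ways. Summing over $Y$ and using that $F_{(m')}$ is balanced gives $\lvert F_{(m')}^{-1}(Y)\rvert = \lvert Y\rvert\cdot 2^{k-1} = 2^{m'-m}\cdot 2^{k-1}$. On the other hand, by the observation above, $F_{(m')}^{-1}(Y)$ is precisely the set of $(x_1,\dots,x_{m'})$ with $(x_1,\dots,x_m)\in F_{(m)}^{-1}(y)$ and arbitrary tail, so $\lvert F_{(m')}^{-1}(Y)\rvert = \lvert F_{(m)}^{-1}(y)\rvert\cdot 2^{m'-m}$. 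Comparing the two expressions and cancelling $2^{m'-m}$ yields $\lvert F_{(m)}^{-1}(y)\rvert = 2^{k-1}$, which is exactly balancedness of $F_{(m)}$.

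For the first statement, the definition of a potential $(k,n')$-lifting already requires $f$ to have diameter $k$ and $F_{(m)}$ to be balanced for every $m$ with $k\leq m\leq n'$; since $n\leq n'$, this covers in particular every $m$ with $k\leq m\leq n$, so $f$ is a potential $(k,n)$-lifting by definition. (Equivalently, it follows from the second statement: balancedness of $F_{(n)}$ alone already propagates down to every $F_{(m)}$ with $k\leq m\leq n$, so the ``for every $m$'' in the definition is in fact redundant.) I do not expect a genuine obstacle here; the only care needed is the bookkeeping that identifies the restriction of $F_{(m')}$ to its first $m-k+1$ output coordinates with $F_{(m)}$, and the verification that the cylinder $Y$ has dimension $m'-m$. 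The substantive point is simply that, unlike Lemma~\ref{uniform-distribution}, we deduce balancedness of a shorter non-wrapping function from balancedness of a longer one rather than from global bijectivity.
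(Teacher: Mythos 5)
Your proof is correct and follows essentially the same route as the paper: the second statement is proved by introducing the same cylinder set $Y$ and the counting identity $\lvert F_{(m')}^{-1}(Y)\rvert=2^{m'-m}\lvert F_{(m)}^{-1}(y)\rvert$, and the first statement is observed to follow directly from the definition. Your additional bookkeeping (identifying the first $m-k+1$ output coordinates of $F_{(m')}$ with $F_{(m)}$) just makes explicit what the paper leaves implicit.
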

	
	\begin{proof}
		The first statement follows directly from the definition.
		For the latter statement, let $k\leq m\leq m'$, pick $y\in\F_2^{m-k+1}$, and set
		\[
		Y=\{ z\in\F_2^{m'-k+1} : z=(y,y') \text{ for some } y'\in\F_2^{m'-m} \}.
		\]
		Since $\lvert F_{(m')}^{-1}(Y)\rvert=2^{m'-m}\lvert F_{(m)}^{-1}(y)\rvert$ and $F_{(m')}$ is balanced, we get
		\[
		\lvert F_{(m)}^{-1}(y)\rvert=\frac{\lvert F_{(m')}^{-1}(Y)\rvert}{2^{m'-m}}=\frac{2^{k-1}\lvert Y\rvert}{2^{m'-m}}=2^{k-1}.\qedhere
		\]
	\end{proof}
	
	\begin{remark}
		It is observed in \cite{OS-iacr} that $f$ can only be a proper $(k, n)$-lifting if $f(0, 0, \dotsc , 0) \neq f(1, 1, \dotsc , 1)$, but this is not required for potential $(k, n)$-liftings. However, when searching for proper $(k, n)$-liftings, to reduce the space, it would still be natural to consider only the potential $(k, n)$-liftings satisfying $f(0, \dotsc , 0) = 0$ and $f(1, \dotsc , 1) = 1$.
	\end{remark}
	
	\begin{remark}
		It follows from the definition that all potential $(k,n)$-liftings must be balanced, and a balanced Boolean function in $k$ variables cannot have algebraic degree $k$ (see \cite{cusick}, Theorem~2.5). Therefore, all potential $(k,n)$-liftings have degree at most $k-1$.
  \end{remark}

  \begin{proposition}
  If $f$ is a potential $(k,n)$-lifting for $n\geq 2k-1$, then $f$ is balanced on either the subspace $x_1 = 0$ or the subspace $x_k=0$.
    \end{proposition}
  \begin{proof}
  First, $f(x_1, \dotsc, x_k) \oplus f(x_k, \dotsc, x_{2k-1})$ must be balanced (this is a special case of \cite[Proposition~35]{carlet-book}). Let $e_{\alpha \beta}$ denote the number of vectors $x \in \F_2^k$ for which $x_1=\alpha$, $x_k=\beta$ and $f(x)=0$, minus the ``expected'' value $2^{k-3}$. Then, the number of vectors $x \in \F_2^{2k-1}$ for which $x_1 = \alpha$, $x_k = \beta$, $x_{2k-1}=\gamma$ and $f(x_1, \dotsc, x_k) \oplus f(x_k, \dotsc, x_{2k-1}) = 0$ minus the expected value is given by $$\left( 2^{k-3} + e_{\alpha \beta} \right) \left( 2^{k-3} + e_{\beta \gamma} \right) + \left( 2^{k-3} - e_{\alpha \beta} \right) \left( 2^{k-3} - e_{\beta \gamma} \right) - 2^{2k-5} = 2e_{\alpha \beta} e_{\beta \gamma}.$$ Counting over all possibilities of $\{ \alpha, \beta, \gamma \}$, it follows that $$e_{00} e_{00} + e_{00} e_{01} + e_{01} e_{10} + e_{01} e_{11} + e_{10} e_{00} + e_{10} e_{01} + e_{11} e_{10} + e_{11} e_{11} = 0$$ which can be written as $(e_{00} + e_{10})(e_{00} + e_{01}) + (e_{01} + e_{11})(e_{10} + e{11}) = 0$. Since $e_{00} + e_{01} + e_{10} + e_{11} = 0$, the two main terms are identical, and we thus have $(e_{00} + e_{10})(e_{00} + e_{01}) = 0$, or equivalently, that $f$ is balanced on either the subspace $x_1 = 0$ or the subspace $x_k=0$.
	\end{proof}
	
	Let $S_{k, n}$ denote the set of all $f\colon \F_2^k\to\F_2$ such that $f$ is a potential $(k,n)$-lifting and $f(0, 0, \dotsc , 0) = 0$, and let $S_k = \{ f\colon \F_2^k\to\F_2 \mid f \in S_{k, n} \text{ for all } n\geq k\}$. Data suggest that we have $\lvert S_3\rvert=10$, $\lvert S_4\rvert=264$, and $\rvert S_5\rvert=70942$. Among these functions, $5$, $132$, and $35450$, respectively, satisfy $f(1, \dotsc, 1)=1$.

\medskip
    A Boolean function $f\colon\F_2^k\to\F_2$ is called \emph{permutive} if $f(x)\oplus x_1$ is independent of $x_1$ or if $f(x)\oplus x_k$ is independent of $x_k$. We will now see that if $f$ is permutive and has diameter $k$, then it is an almost lifting.
 
	\begin{proposition}\label{permutive}
		For any two Boolean functions $h, h'\colon\F_2^{k-1} \to \F_2$, where $h(x)$ depends on $x_1$ and $h'(x)$ depends on $x_{k-1}$, the permutive functions $f, g\colon\F_2^k \to \F_2$ given by $f(x_1, \dotsc, x_k) = h(x_1, \dotsc, x_{k-1}) \oplus x_k$ and $g(x_1, \dotsc, x_k) = x_1 \oplus h'(x_2, \dotsc, x_k)$ are potential $(k, n)$-liftings for all $n \geq k$.
	\end{proposition}
	
	\begin{proof}
		Suppose $f$ has this form, and take any $y \in \F_2^{m-k+1}$. The diameter of $f$ is clearly $k$, and it suffices to prove that for any $z\in \F_2^{k-1}$, there is exactly one element of the form $x=(z,w) \in  F_{(m)}^{-1}(y)$. Indeed, there are $2^{k-1}$ elements in $\F_2^{k-1}$, so this would give that $\lvert F_{(m)}^{-1}(y)\rvert=2^{k-1}$. But given $x_1, \dotsc, x_{k-1}$ for some element $x \in F_{(m)}^{-1}(y)$, and for $i=0, 1, \dotsc, m-k$ in turn, we necessarily have $x_{k + i} = y_{i + 1} \oplus h(x_{i + 1}, \dotsc, x_{i + k - 1)}$. The corresponding argument for $g$ is immediate by symmetry.
	\end{proof}

	\begin{corollary}
		We have that $\lvert S_k\rvert \geq 2^{2^{k-1}}-3 \cdot 2^{2^{k-2}-1}$.
	\end{corollary}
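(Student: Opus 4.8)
The plan is to produce this lower bound by counting the permutive Boolean functions of diameter $k$, which Lemma~\ref{permutive} guarantees lie in $S_k$ once we impose the normalization $f(0,\dots,0)=0$. First I would recall that $f$ is permutive exactly when it has one of the two shapes appearing in Lemma~\ref{permutive}: either $f(x)=h(x_1,\dots,x_{k-1})\oplus x_k$ (Case~A) or $f(x)=x_1\oplus h'(x_2,\dots,x_k)$ (Case~B). In Case~A the function automatically depends on $x_k$, so diameter $k$ is equivalent to $h$ depending on $x_1$; symmetrically in Case~B. Since every permutive diameter-$k$ function and its complement $f\oplus 1$ are again permutive of the same diameter, and exactly one of the two vanishes at the origin, the number of elements of $S_k$ obtained this way is exactly half the total number of permutive diameter-$k$ functions.

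So the core task is to count permutive diameter-$k$ functions by inclusion--exclusion on the two cases. For Case~A I would count functions $h\colon\F_2^{k-1}\to\F_2$ that depend on $x_1$: the total number of such $h$ is $2^{2^{k-1}}$, and those \emph{not} depending on $x_1$ are determined by their restriction to $\F_2^{k-2}$, giving $2^{2^{k-2}}$, so Case~A contributes $2^{2^{k-1}}-2^{2^{k-2}}$. Case~B gives the same count by the left--right symmetry. The overlap consists of functions that are simultaneously affine with coefficient $1$ in both $x_1$ and $x_k$; writing out both conditions forces $f(x)=x_1\oplus x_k\oplus q(x_2,\dots,x_{k-1})$, and any such $f$ already has full diameter and meets both dependence requirements, so the intersection has size $2^{2^{k-2}}$.

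Combining, the number of permutive diameter-$k$ functions is
\[
2\big(2^{2^{k-1}}-2^{2^{k-2}}\big)-2^{2^{k-2}}=2^{2^{k-1}+1}-3\cdot 2^{2^{k-2}},
\]
and halving for the normalization yields $2^{2^{k-1}}-3\cdot 2^{2^{k-2}-1}$, the claimed bound. The main thing to get right is the characterization of the overlap: I would verify carefully that the ``affine in $x_1$'' and ``affine in $x_k$'' conditions combine to leave exactly a free function on the $k-2$ middle variables, and that the diameter-$k$ and dependence hypotheses of Lemma~\ref{permutive} impose no further restriction there. The two single-case counts and the halving via the complement involution are then routine once the forms are pinned down.
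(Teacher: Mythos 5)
Your proposal is correct and follows essentially the same route as the paper: inclusion--exclusion over the two permutive shapes from Lemma~\ref{permutive} (each contributing $2^{2^{k-1}}-2^{2^{k-2}}$), an intersection of size $2^{2^{k-2}}$ consisting of the functions $x_1\oplus q(x_2,\dots,x_{k-1})\oplus x_k$, and halving via the complement involution to enforce $f(0,\dots,0)=0$. The only difference is that you spell out the verification of the overlap characterization and the halving step, which the paper leaves implicit.
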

	
	\begin{proof}
		The number of functions of the form $h(x_1, \dotsc, x_{k-1}) \oplus x_k$ that depend on $x_1$ is $2^{2^{k-1}} - 2^{2^{k-2}}$, and similarly for functions of the form $x_1 \oplus h'(x_2, \dotsc, x_k)$ that depend on $x_k$. There are $2^{2^{k-2}}$ functions in the intersection, i.e., of the form $x_1 \oplus h(x_2, \dotsc,  x_{k-1}) \oplus x_k$. This gives us $2 \cdot 2^{2^{k-1}} - 3 \cdot 2^{2^{k-2}}$ distinct functions, of which half satisfy $f(0,\dotsc, 0) = 0$.
	\end{proof}
	
	The corollary gives us $\lvert S_3\rvert \geq 10$, $\lvert S_4\rvert \geq 232$ and $\lvert S_5\rvert \geq 65152$, which is not far from the actual values.

	\begin{definition}
		Consider the maps $c,r\colon \F_2^k\to\F_2^k$ given by complementing and reflecting, that commute, and are defined by
		\[
		c(x_1,x_2,\dotsc,x_k)=(\overline{x_1},\overline{x_2},\dotsc,\overline{x_k}) \quad\text{and}\quad
		r(x_1,x_2,\dotsc,x_k)=(x_k,\dotsc,x_2,x_1).
		\]
		We say that two Boolean functions $f,g\colon\F_2^k\to\F_2$ are \emph{elementary equivalent} if there are $i,j,\ell\in\{0,1\}$ such that
		\[
		g(x)\oplus \ell=f\circ r^i\circ c^j(x).
		\]
		There are at most eight functions in such an equivalence class, and the corresponding induced functions have identical cryptographic properties.
	\end{definition}

    \begin{corollary}\label{permutive-bound}
    The number of elementary equivalence classes of permutive Boolean functions of diameter $k$ is equal to
    \[
    \frac{1}{8}
    \begin{cases}
        2 \cdot 2^{2^{k-1}} + 2^{2^{k-2}} + 2 \cdot 2^{2^{k-3} + 2^{\frac{k}{2}-2}} - 6 \cdot 2^{\lfloor 2^{k-3} \rfloor} &\quad\text{if }k \equiv 0\, (\operatorname{mod} 2)\\
        2 \cdot 2^{2^{k-1}} + 2^{2^{k-2}} + 2^{2^{k-3} + 2^{\frac{k-3}{2}}} - 4 \cdot 2^{2^{k-3}} &\quad\text{if }k \equiv 1\, (\operatorname{mod} 2)
    \end{cases}
    \]
    \end{corollary}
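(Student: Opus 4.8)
The plan is to realize the elementary equivalence classes as orbits of a group action and count them with Burnside's (Cauchy--Frobenius) lemma. The reflection $r$, the complementation $c$, and the translation $f\mapsto f\oplus 1$ act on Boolean functions by $(\tau,\ell)\cdot f=f\circ\tau\oplus\ell$; since $r$, $c$, and the translation are commuting involutions, the group $G$ they generate is elementary abelian of order $8$, and its orbits on the set $P$ of permutive functions of diameter $k$ are precisely the elementary equivalence classes. So I would compute the number of classes as $\frac{1}{8}\sum_{g\in G}\lvert\mathrm{Fix}(g)\rvert$. First I would record the decomposition $P=L\cup R$, where $L$ consists of the functions $x_1\oplus h(x_2,\dots,x_k)$ with $h$ depending on $x_k$ and $R$ of the functions $x_k\oplus h'(x_1,\dots,x_{k-1})$ with $h'$ depending on $x_1$, and where $L\cap R=B$ consists of the functions $x_1\oplus x_k\oplus p(x_2,\dots,x_{k-1})$; the count $\lvert P\rvert=2\cdot 2^{2^{k-1}}-3\cdot 2^{2^{k-2}}$ from the previous corollary is $\lvert\mathrm{Fix}(\mathrm{id})\rvert$, while the translation alone has no fixed points.

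The key structural point is that $c$ preserves each of $L$ and $R$, whereas every element involving $r$ interchanges $L$ and $R$, so its fixed functions must lie in $B$. I would therefore handle the $c$-type elements ($c$ and $c\ell$) by inclusion--exclusion over $L,R,B$: on $L$ the condition $f\circ c=f$ (resp. $f\circ c=f\oplus 1$) translates into $h$ being anti-complement-symmetric (resp. complement-symmetric), and subtracting the $h$ that fail to depend on $x_k$ (to enforce diameter $k$) gives $2^{2^{k-2}}-2^{2^{k-3}}$ in each case, with a $B$-correction of $2^{2^{k-3}}$; both $c$ and $c\ell$ then contribute $2\cdot 2^{2^{k-2}}-3\cdot 2^{2^{k-3}}$. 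For the four $r$-type elements I would restrict to $B$ and observe that the relevant symmetry of $p(x_2,\dots,x_{k-1})$ is invariance/anti-invariance under reversal (for $r,r\ell$) and under reverse-complement (for $rc,rc\ell$) acting on $\F_2^{k-2}$; counting invariant $p$ reduces to counting orbits of these involutions, giving $2$ raised to the number of orbits.

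The main obstacle, and the source of the two cases, is the parity bookkeeping for these involutions on $\F_2^{k-2}$. Reversal always fixes the palindromes, so $r\ell$ contributes $0$ while $r$ contributes $2^{\,2^{k-3}+2^{\lceil (k-2)/2\rceil-1}}$, where $\lceil(k-2)/2\rceil-1$ equals $k/2-2$ for even $k$ and $(k-3)/2$ for odd $k$. Reverse-complement, by contrast, is fixed-point-free exactly when $k$ is odd (the self-paired middle coordinate would force $z_i=\overline{z_i}$), which reverses the roles: for even $k$ the map $rc$ contributes $2^{\,2^{k-3}+2^{k/2-2}}$ and $rc\ell$ contributes $0$, while for odd $k$ both $rc$ and $rc\ell$ contribute $2^{\,2^{k-3}}$. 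Collecting the eight counts, summing, and dividing by $8$ produces the two displayed formulas, the floor in the even case serving only to absorb the degenerate value $k=2$. I expect essentially all the work to lie in deriving these fixed-point counts correctly, in particular keeping track of the diameter constraint in the $c$-type terms and of the palindrome and reverse-complement fixed-point counts across the two parities.
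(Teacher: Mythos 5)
Your proposal is correct and takes essentially the same route as the paper: the paper's count $\frac{1}{8}\bigl(\lvert T_k\rvert+\lvert T^r_k\rvert+\lvert T^c_k\rvert+\lvert T^{r\circ c}_k\rvert\bigr)$ is exactly your Burnside sum with the $\kappa=0$ and $\kappa=1$ fixed-point counts paired up. Your individual evaluations (inclusion--exclusion over the two permutive families and their intersection for the complementation terms, reduction to functions of the form $x_1\oplus p(x_2,\dotsc,x_{k-1})\oplus x_k$ and orbit-counting of reversal and reverse-complement for the terms involving $r$, with the parity analysis of fixed points deciding which $\kappa$ occur) all match the paper's and yield the same two displayed formulas.
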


    \begin{proof}
    Let $T_k$ denote the set of potential $(k, n)$-liftings given by Lemma~\ref{permutive}. Let $T^r_k = \{ f \in T_k : f(x) = f \circ r(x) \oplus \kappa\}$ for some $\kappa$, and similarly $T^c_k = \{ f \in T_k : f(x) = f \circ c(x) \oplus \kappa\}$ and $T^{r \circ c}_k = \{ f \in T_k : f(x) = f \circ r \circ c(x) \oplus \kappa\}$ for some $\kappa$. The expression $\lvert T_k\rvert + \lvert T^r_k\rvert + \lvert T^c_k\rvert + \lvert T^{r \circ c}_k\rvert$ counts each function with 8 elements in its equivalence class once, each function with 4 elements in its equivalence class twice, and each function with 2 elements in its equivalence class four times. Thus, the number of equivalence classes of $T_k$ is $$\frac{\lvert T_k\rvert + \lvert T^r_k\rvert + \lvert T^c_k\rvert + \lvert T^{r \circ c}_k\rvert}{8}.$$
    
    The number of functions of the form $h(x_1, \dotsc, x_{k-1}) \oplus x_k$ that depend on $x_1$ is $2^{2^{k-1}} - 2^{2^{k-2}}$, and similarly for functions of the form $x_1 \oplus h'(x_2, \dotsc, x_k)$ that depend on $x_k$. There are $2^{2^{k-2}}$ functions in the intersection, i.e., of the form $x_1 \oplus h(x_2, \dotsc,  x_{k-1}) \oplus x_k$. Thus, $\lvert T_k\rvert = 2 \cdot 2^{2^{k-1}} - 3 \cdot 2^{2^{k-2}}$.
    
    The functions in $T^r_k$ must have $\kappa=0$ since $r$ fixes some inputs, and have the form $f(x) = x_1 \oplus h(x_2, \dotsc, x_{k-1}) \oplus x_k$. The expression $2^{k-2} + 2^{\lfloor \frac{k-1}{2}\rfloor}$ counts the $x$'s for which $h(x) = h \circ r(x)$ twice and those for which $h(x) \neq h \circ r(x)$ once. Thus,
    \[
    \lvert T^r_k\rvert=
    \begin{cases}
        2^{\frac{2^{k-2} + 2^{\frac{k}{2}-1}}{2}} &\quad\text{if }k \equiv 0\, (\operatorname{mod} 2)\\
        2^{\frac{2^{k-2} + 2^{\frac{k-1}{2}}}{2}} &\quad\text{if }k \equiv 1\, (\operatorname{mod} 2)
    \end{cases}
    \]
    The functions in $T^c_k$ could have $\kappa$ equal to either 0 or 1, except for $k=2$, in which case we must have $f(x) = x_1 \oplus x_2 \oplus $ some constant which implies $\kappa=0$. Like in the derivation of $\lvert T_k\rvert$ above, we have $$\lvert T^c_k\rvert = 2 \left( 2 \cdot 2^{2^{k-2}} - 3 \cdot 2^{\lfloor 2^{k-3} \rfloor} \right)$$ (which happens to give the correct number also for $k=2$).
    
    The functions in $T^{r \circ c}_k$ must have $\kappa=0$ if $k$ is even, otherwise it can be either 0 or 1, and they must have the form $f(x) = x_1 \oplus h(x_2, \dotsc, x_{k-1}) \oplus x_k$. Thus,
    \[
    \lvert T^{r \circ c}_k\rvert=
    \begin{cases}
        2^{\frac{2^{k-2}+2^{\frac{k}{2}-1}}{2}} &\quad\text{if }k \equiv 0\, (\operatorname{mod} 2)\\
        2 \cdot 2^{\frac{2^{k-2}}{2}} &\quad\text{if }k \equiv 1\, (\operatorname{mod} 2)
    \end{cases}
    \]
    Thus, the number of classes is
    \[
    \frac{1}{8}
    \begin{cases}
        2 \cdot 2^{2^{k-1}} + 2^{2^{k-2}} + 2 \cdot 2^{2^{k-3} + 2^{\frac{k}{2}-2}} - 6 \cdot 2^{\lfloor 2^{k-3} \rfloor} &\quad\text{if }k \equiv 0\, (\operatorname{mod} 2)\\
        2 \cdot 2^{2^{k-1}} + 2^{2^{k-2}} + 2^{2^{k-3} + 2^{\frac{k-3}{2}}} - 4 \cdot 2^{2^{k-3}} &\quad\text{if }k \equiv 1\, (\operatorname{mod} 2)
    \end{cases}
    \]
    \end{proof}

    Comparison between the number of elementary equivalence classes of all almost liftings and of all permutive Boolean functions of diameter $k=3,4,5$, using Corollary~\ref{permutive-bound}:
    \[
	\begin{array}{|c|c|c|} \hline
		k & \#\,\text{almost liftings} & \#\,\text{permutive} \\ \hline
		3 & 4 & 4 \\
		4 & 73 & 65 \\
		5 & 17881 & 16416 \\ \hline
	\end{array}
	\]
	
	Detailed tables for $k=3,4,5$ are given in Appendix~\ref{liftings}.
	
	\section{Almost liftings}\label{sec-almost}
	
	Let $f\colon\F_2^k\to\F_2$ be a Boolean function and for every $n\geq k$ we define $F\colon \F_2^n\to\F_2^n$ by
	\[
	F(x_1,\dotsc,x_n)= \Big( f(x_1,x_2,\dotsc,x_k),
	f(x_2,x_3,\dotsc,x_{k+1}),\dotsc,
	f(x_{n-k+1},x_{n-k+2},\dotsc, x_n) \Big),
	\]
	and set
	\[
	\ell_n(f)=\max_{y\in\F_2^n} \lvert F^{-1}(y)\rvert,
	\]
	\[
	\ell(f)=\sup_{n\geq k}\ell_n(f).
	\]
	
	\begin{definition}
		Let $f\colon\F_2^k\to\F_2$ be a Boolean function of diameter $k$.
		If $\ell(f)<\infty$, we say that $f$ is an \emph{almost lifting}, and if $\ell(f)=1$ we say that $f$ is a \emph{proper lifting}.
  \end{definition}

\begin{example}
\begin{itemize}
    \item[(i)]
    Let $\chi(x)=x_1\oplus (x_2\oplus 1)x_3$ be the function used in, e.g., Keccak. Then it is known that $\ell_n(\chi)=1$ for $n$ odd and $3$ for $n$ even. In particular, $\sup_{n\geq k}\ell_n(\chi)=3$ so $\chi$ is an almost lifting.
    \item[(ii)]
    The function $f(x)=x_2\oplus x_1(x_3\oplus1)x_4$ is, up to elementary equivalence, the only nonaffine proper lifting for $k\leq 4$ (first observed by Patt \cite{patt}), i.e., $\ell_n(f)=1$ for all $n$.
    \item[(iii)]
    Define the function $g(x)=x_1\oplus x_2(x_3\oplus x_4\oplus 1)$. Then $f$ is an almost lifting, since it is permutive, but it seems like $\ell_n(f)$ is a nonperiodic sequence, that we computed for $4\leq n\leq 20$ to be $4,2,4,2,4,2,3,2,4,2,3,3,4,2,4,3,4$.
\end{itemize}

\end{example}

    A proper lifting is called a ``locally invertible'' function in \cite{JDA-thesis}, while a function is called ``globally invertible over $n$'' in \cite{JDA-thesis} if it is a proper $(k,n)$-lifting.

\medskip

In order to prove Theorem~\ref{main-theorem}, we now introduce the following auxiliary notation.

  \begin{definition}
		Let $l>1$ and assume that the diameter of $f$ is $k$.
		Then $f$ is called a \emph{potential $(k, n, l)$-lifting} if $\lvert F_{(m)}^{-1}(y) \rvert \leq l \cdot 2^{k-1}$ for any $y \in \F_2^{m-k+1}$ for every $m$ such that $k \leq m \leq n$.
	\end{definition}
	
	\begin{lemma}\label{almost-then-potential-l}
		If $f$ is an almost lifting, then $f$ is a potential $(k, n, \ell(f))$-lifting for all $n \geq k$.
	\end{lemma}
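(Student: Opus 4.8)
The plan is to run the same preimage-counting argument as in the proof of Lemma~\ref{uniform-distribution}, but to replace the equality coming from bijectivity of $F$ with the inequality supplied by the finite bound $\ell(f)$. Throughout, write $l=\ell(f)$, which is finite because $f$ is an almost lifting, and fix $n\geq k$. I need to show that for every $m$ with $k\leq m\leq n$ and every $y\in\F_2^{m-k+1}$ one has $\lvert F_{(m)}^{-1}(y)\rvert\leq l\cdot 2^{k-1}$; here $F\colon\F_2^n\to\F_2^n$ denotes the induced (wrap-around) function on $n$ bits, and $\ell_n(f)\leq\ell(f)=l$ by definition of the supremum.

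First I would record the embedding of $F_{(m)}$ into $F$. Writing a point of $\F_2^n$ as $(x,x')$ with $x\in\F_2^m$ and $x'\in\F_2^{n-m}$, the first $m-k+1$ output coordinates of $F(x,x')$ are $f(x_1,\dotsc,x_k),\dotsc,f(x_{m-k+1},\dotsc,x_m)$, which (since $m\leq n$) never wrap around and hence depend only on $x$; they are exactly $F_{(m)}(x)$. Setting
\[
Y=\{\, z\in\F_2^n : z=(y,y') \text{ for some } y'\in\F_2^{n-(m-k+1)} \,\},
\]
this shows that $F_{(m)}(x)=y$ if and only if $(x,x')\in F^{-1}(Y)$ for every $x'\in\F_2^{n-m}$, and therefore $\lvert F^{-1}(Y)\rvert=2^{n-m}\,\lvert F_{(m)}^{-1}(y)\rvert$, exactly as in Lemma~\ref{uniform-distribution}.

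The single point where the argument departs from the bijective case is the evaluation of $\lvert F^{-1}(Y)\rvert$. In place of the equality $\lvert F^{-1}(Y)\rvert=\lvert Y\rvert$, I would bound
\[
\lvert F^{-1}(Y)\rvert=\sum_{z\in Y}\lvert F^{-1}(z)\rvert\leq \lvert Y\rvert\cdot\ell_n(f)\leq \lvert Y\rvert\cdot l,
\]
using $\ell_n(f)\leq l$. Since $\lvert Y\rvert=2^{n-(m-k+1)}$, dividing the identity $\lvert F^{-1}(Y)\rvert=2^{n-m}\lvert F_{(m)}^{-1}(y)\rvert$ by $2^{n-m}$ gives
\[
\lvert F_{(m)}^{-1}(y)\rvert=\frac{\lvert F^{-1}(Y)\rvert}{2^{n-m}}\leq\frac{2^{n-(m-k+1)}\,l}{2^{n-m}}=l\cdot 2^{k-1},
\]
which is precisely the defining inequality for a potential $(k,n,l)$-lifting.

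I do not expect a genuine obstacle: the content lies entirely in reusing the identity $\lvert F^{-1}(Y)\rvert=2^{n-m}\lvert F_{(m)}^{-1}(y)\rvert$ and softening one equality into an inequality. The only points that require care are verifying that the leading $m-k+1$ coordinates of $F$ really do coincide with $F_{(m)}$ with no wrap-around (so that the preimage identity is exact, including the boundary case $m=n$, where $x'$ ranges over a single point), and keeping the diameter-$k$ hypothesis in force, which is inherited directly from the definition of almost lifting.
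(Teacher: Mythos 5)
Your proposal is correct and follows essentially the same route as the paper's proof: both reuse the set $Y$ and the identity $\lvert F^{-1}(Y)\rvert=2^{n-m}\lvert F_{(m)}^{-1}(y)\rvert$ from Lemma~\ref{uniform-distribution}, replacing the bijectivity equality with the bound $\lvert F^{-1}(Y)\rvert\leq\ell(f)\lvert Y\rvert$. No gaps.
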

	
	\begin{proof}
		If $\ell(f)<\infty$,
		then $\lvert f^{-1}(y)\rvert \leq\ell(f)$ for all $y\in\F_2^n$,
		so $\lvert f^{-1}(Y)\rvert \leq\ell(f)\lvert Y\rvert$ for any $Y\subseteq\F_2^n$.
		Let $k\leq m\leq n$, pick $y\in\F_2^{m-k+1}$ and define $Y$ as in the proof of Lemma~\ref{uniform-distribution}, then
		\[
		\lvert F_{(m)}^{-1}(y)\rvert=\frac{\lvert F^{-1}(Y)\rvert}{2^{n-m}} \leq \frac{\ell(f) \lvert Y\rvert}{2^{n-m}}=\frac{\ell(f) 2^{n-(m-k+1)}}{2^{n-m}}=\ell(f)2^{k-1}.
		\]
	\end{proof}
	
	\begin{lemma}
		Fix some $l>1$ and let $m\geq k$.
		If $F_{(m)}$ is not balanced, then for any sufficiently large $r$, there exists $z \in \F_2^{rm-k+1}$ such that $\lvert F_{(rm)}^{-1}(z)\rvert>l\cdot 2^{k-1}$.
	\end{lemma}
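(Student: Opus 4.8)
The plan is to prove this by a concatenation-and-pigeonhole argument, exploiting that the ``gluing patterns'' between blocks grow only geometrically with ratio $2^{k-1}$, whereas an over-represented output block forces the number of inputs to grow with a strictly larger ratio. First I would record that the average of $\lvert F_{(m)}^{-1}(y)\rvert$ over $y\in\F_2^{m-k+1}$ is $2^m/2^{m-k+1}=2^{k-1}$; since $F_{(m)}$ is assumed not balanced, the values are not all equal to this average, so some output word $w^*\in\F_2^{m-k+1}$ satisfies $N:=\lvert F_{(m)}^{-1}(w^*)\rvert>2^{k-1}$, and being an integer, $N\geq 2^{k-1}+1$.

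Next I would build inputs of length $rm$ from copies of preimages of $w^*$. For every tuple $(x^{(1)},\dotsc,x^{(r)})$ with each $x^{(i)}\in F_{(m)}^{-1}(w^*)$, form the concatenation $x=x^{(1)}x^{(2)}\cdots x^{(r)}\in\F_2^{rm}$. There are exactly $N^r$ such inputs, and they are pairwise distinct, since a concatenation of blocks of the fixed length $m$ determines each block. The key step is to analyze $F_{(rm)}(x)$, whose output has length $rm-k+1$. I would split its $rm-k+1$ windows of length $k$ into those lying entirely inside a single block $x^{(i)}$ and the $k-1$ \emph{junction} windows straddling each of the $r-1$ block boundaries. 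The internal windows of block $i$ reproduce $F_{(m)}(x^{(i)})=w^*$, so the output has the shape $z=w^*\,j_1\,w^*\,j_2\cdots j_{r-1}\,w^*$, where each junction word $j_i\in\F_2^{k-1}$ depends only on the last $k-1$ bits of $x^{(i)}$ and the first $k-1$ bits of $x^{(i+1)}$. A length count confirms $r(m-k+1)+(r-1)(k-1)=rm-k+1$, so nothing is double-counted or omitted. Consequently every one of the $N^r$ inputs maps to an output completely determined by the tuple $(j_1,\dotsc,j_{r-1})$, of which there are at most $(2^{k-1})^{r-1}=2^{(k-1)(r-1)}$ possibilities.

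Finally I would apply the pigeonhole principle: some single output $z$ receives at least $N^r/2^{(k-1)(r-1)}=2^{k-1}\bigl(N/2^{k-1}\bigr)^r$ of these inputs, all of which genuinely lie in $F_{(rm)}^{-1}(z)$. Since $N/2^{k-1}>1$, this lower bound grows without bound in $r$, so it exceeds $l\cdot 2^{k-1}$ as soon as $r>\log l/\log(N/2^{k-1})$. This establishes the existence of $z\in\F_2^{rm-k+1}$ with $\lvert F_{(rm)}^{-1}(z)\rvert>l\cdot 2^{k-1}$ for all sufficiently large $r$.

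The only delicate part is the bookkeeping in the middle step, namely verifying that the internal windows exactly recover $w^*$ and that each of the $r-1$ junctions contributes precisely $k-1$ output symbols depending only on the $2(k-1)$ boundary bits. I expect this to be a routine indexing check rather than a genuine obstacle; the real content is the contrast between the input count $N^r$ and the bound $2^{(k-1)(r-1)}$ on the number of distinct gluing patterns. (Equivalently, one could phrase the whole argument via the transfer matrices $M_b$ on states $\F_2^{k-1}$ with $\lvert F_{(n)}^{-1}(w)\rvert=\mathbf{1}^{\mathsf T}M_{w_1}\cdots M_{w_{n-k+1}}\mathbf{1}$, but the direct concatenation estimate seems cleanest and yields the stated length $rm$ on the nose.)
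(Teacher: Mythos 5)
Your proof is correct and follows essentially the same route as the paper's: pick an over-represented output word $w^*$ with $N=c\cdot 2^{k-1}$ preimages ($c>1$), concatenate $r$ preimages to get $N^r$ inputs whose images all have the form $w^*\,j_1\,w^*\cdots j_{r-1}\,w^*$ with at most $2^{(k-1)(r-1)}$ choices of junction words, and conclude by pigeonhole that some output has at least $c^r 2^{k-1}$ preimages, which exceeds $l\cdot 2^{k-1}$ once $c^r>l$. The only difference is presentational (you verify the indexing of the junction windows explicitly, which the paper leaves implicit).
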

	
	\begin{proof}
		If $F_{(m)}$ is not balanced,
		there exist $y \in \F_2^{m-k+1}$ and a rational number $c>1$ such that $\lvert F_{(m)}^{-1}(y) \rvert = c \cdot 2^{k-1}$.
		Let $X = \{ x \in \F_2^m \colon F_{(m)}(x) = y\}$ and choose a natural number $r$ so such that $c^r>l$. Then $F_{(rm)}$ maps any element of the form $(x_1, \dotsc, x_r)$ with $x_i \in X$ to an element of the form $(y, y_1, y, y_2, \dotsc, y_{r-1}, y)$ with $y_i \in \F_2^{k-1}$ for $1\leq i\leq r-1$. Let
		\[
		\begin{split}
			X^r &= \{ (x_1, \dotsc, x_r) \in\F_2^{rm} : x_i \in X\}, \\
			Z &= \{ (y, y_1, y, y_2, \dotsc, y_{r-1}, y) \in\F_2^{rm-k+1} : y_i \in \F_2^{k-1} \}.
		\end{split}
		\]
		Then $F_{(rm)}$ maps $X^r$ onto $Z$.
		We see that $\lvert X^r\rvert = c^r 2^{r(k-1)}$ and $\lvert Z\rvert = 2^{(r-1)(k-1)}$, and it follows that there exists one element $z\in Z$ such that the size of its inverse image is at least $c^r 2^{k-1}$.
	\end{proof}
	
	\begin{corollary}\label{potential-l}
		Let $l>1$.
		Assume that $f$ is a potential $(k, n, l)$-lifting for all $n\geq k$. Then $f$ is a potential $(k, n)$-lifting for all $n\geq k$.
	\end{corollary}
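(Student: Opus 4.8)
The plan is to argue by contraposition, leaning entirely on the preceding lemma. First I would reformulate the goal: by the definition of a potential $(k,n)$-lifting together with the earlier monotonicity corollary (balancedness of $F_{(m')}$ forces balancedness of $F_{(m)}$ for $k\leq m\leq m'$), the statement ``$f$ is a potential $(k,n)$-lifting for every $n\geq k$'' is equivalent to the single assertion that $F_{(m)}$ is balanced for every $m\geq k$. Thus it suffices to rule out the existence of an unbalanced $F_{(m)}$.

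So suppose, toward a contradiction, that $F_{(m)}$ is not balanced for some $m\geq k$. I would then invoke the preceding lemma with this same $m$ and the given $l>1$: it produces, for all sufficiently large $r$, an element $z\in\F_2^{rm-k+1}$ with $\lvert F_{(rm)}^{-1}(z)\rvert> l\cdot 2^{k-1}$. Fixing one such $r$ and putting $n=rm$, I note that $n\geq m\geq k$, so the hypothesis that $f$ is a potential $(k,n,l)$-lifting applies to the index $m'=rm$, which satisfies $k\leq m'\leq n$, and yields $\lvert F_{(rm)}^{-1}(z)\rvert\leq l\cdot 2^{k-1}$. These two inequalities are incompatible, giving the desired contradiction and forcing every $F_{(m)}$ to be balanced.

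I do not anticipate a genuine obstacle here, since all the real work is carried by the preceding lemma, which amplifies a single imbalance at level $m$ into an arbitrarily large violation at level $rm$ via $r$-fold concatenation, using that $c^r$ eventually exceeds $l$ because $c>1$. The only points requiring a little care are the logical translation at the start, unpacking ``potential $(k,n)$-lifting for all $n$'' into the single statement ``$F_{(m)}$ balanced for all $m$,'' and the bookkeeping check that the index $rm$ really lies in the range $k\leq rm\leq n$ covered by the assumed potential $(k,n,l)$-lifting property, so that its bound can legitimately be applied to the witness $z$.
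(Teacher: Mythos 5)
Your argument is correct and is essentially the paper's own proof: both run the contrapositive through the preceding lemma, amplifying a single unbalanced $F_{(m)}$ into a fibre of size exceeding $l\cdot 2^{k-1}$ at level $rm$, contradicting the potential $(k,n,l)$-lifting hypothesis. The only cosmetic difference is that you phrase it as a contradiction and add an (unneeded but harmless) appeal to the monotonicity corollary when unpacking the definition.
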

	
	\begin{proof}
		Assume that there is some $n$ such that $f$ is not a potential $(k, n)$-lifting.
		Then there exists $m$ with $k\leq m\leq n$ such that $F_{(m)}$ is not balanced, and by the above lemma, there exists $m'$ and $z \in \F_2^{m'-k+1}$ such that $\lvert F_{(m')}^{-1}(z)\rvert>l\cdot 2^{k-1}$.
		For $n'\geq m'$, it follows that $f$ is not a potential $(k, n',l)$-lifting.
	\end{proof}
	
	\begin{remark}
		Pick some $l>1$.
		Let $S_{k, n, l}$ denote the set of all Boolean $f\colon \F_2^k\to\F_2$ such that $f$ is a potential $(k, n, l)$-lifting and $f(0, 0, \dotsc , 0) = 0$,
		and let $S_{k,l} = \{ f\colon \F_2^k\to\F_2 \mid f \in S_{k, n, l} \text{ for all } n\geq k\}$. Then, we have
		\[
		\lvert S_{k, l} \rvert =
		\lim_{n \rightarrow \infty} \lvert S_{k, n, l} \rvert =
		\lim_{n \rightarrow \infty} \lvert S_{k, n, 1} \rvert =
		\lvert S_k \rvert.
		\]
		Note that the limits exist since the number of potential $(k,n)$-liftings is bounded from above by $2^{2^k}$ and decreases with growing $n$.
	\end{remark}
	
	\begin{theorem}\label{main-theorem}
		Let $f\colon \F_2^k\to\F_2$.
		Then $f$ is a potential $(k,n)$-lifting for all $n\geq k$ if and only if $f$ is an almost lifting.
		
		Moreover, if $f$ is an almost lifting, then $\ell(f)\leq 2^{k-1}$.
	\end{theorem}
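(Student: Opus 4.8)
The plan is to treat the stated equivalence as two separate implications and to read off the bound $\ell(f)\le 2^{k-1}$ as a byproduct of the harder direction. One implication is essentially already available: if $f$ is an almost lifting then $\ell(f)<\infty$, and the computation in the proof of Lemma~\ref{almost-then-potential-l} gives $\lvert F_{(m)}^{-1}(y)\rvert\le \ell(f)\,2^{k-1}$ for all $m$ and $y$. Setting $l=\max(2,\ell(f))>1$, this says precisely that $f$ is a potential $(k,n,l)$-lifting for every $n\ge k$, and Corollary~\ref{potential-l} then upgrades this to $f$ being a potential $(k,n)$-lifting for all $n\ge k$. The replacement of $\ell(f)$ by $\max(2,\ell(f))$ is only to respect the convention $l>1$ in the definition of a potential $(k,n,l)$-lifting.

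The substantive direction is that a function which is a potential $(k,n)$-lifting for all $n$ is an almost lifting with $\ell(f)\le 2^{k-1}$; this simultaneously produces the ``moreover'' clause. The key idea is to embed a fiber of the periodic map $F$ into a fiber of a nonperiodic map $F_{(m)}$, which we already know to be balanced. Concretely, fix $y\in\F_2^n$ and, for any periodic preimage $x=(x_1,\dots,x_n)\in F^{-1}(y)$, form the ``unrolled'' word $\tilde x=(x_1,\dots,x_n,x_1,\dots,x_{k-1})\in\F_2^{n+k-1}$. I would then verify that $F_{(n+k-1)}(\tilde x)=y$: the first $n-k+1$ output windows of $F_{(n+k-1)}$ coincide with the non-wrapping windows of $F$, while the final $k-1$ windows, which read into the appended block $x_1,\dots,x_{k-1}$, reproduce exactly the wrap-around windows $f(x_j,\dots,x_n,x_1,\dots,x_{j+k-1-n})$ of the periodic map. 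Since $x\mapsto\tilde x$ is manifestly injective (it is the identity on the first $n$ coordinates), this injects $F^{-1}(y)$ into $F_{(n+k-1)}^{-1}(y)$. As $f$ is a potential $(k,n+k-1)$-lifting, $F_{(n+k-1)}$ is balanced, so $\lvert F^{-1}(y)\rvert\le \lvert F_{(n+k-1)}^{-1}(y)\rvert = 2^{k-1}$. Taking the maximum over $y$ and the supremum over $n$ yields $\ell(f)\le 2^{k-1}<\infty$, so $f$ is an almost lifting.

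The main obstacle is the index bookkeeping in the verification $F_{(n+k-1)}(\tilde x)=y$, namely checking that the windows of $F_{(n+k-1)}$ straddling the seam of $\tilde x$ line up with the wrap-around coordinates of $F$; everything else is a one-line comparison of fiber sizes. An alternative formulation of the same estimate, which I would mention but not develop, is via the transfer matrices $M_0,M_1$ acting on states $\F_2^{k-1}$, under which $\lvert F^{-1}(y)\rvert=\operatorname{tr}(M_{y_1}\cdots M_{y_n})$ and $\lvert F_{(n+k-1)}^{-1}(y)\rvert=\mathbf 1^{T}M_{y_1}\cdots M_{y_n}\mathbf 1$; the bound then reduces to the trivial inequality $\operatorname{tr}(P)\le \mathbf 1^{T}P\,\mathbf 1$ for a matrix $P$ with nonnegative entries. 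The direct injection, however, reaches the conclusion without introducing this formalism.
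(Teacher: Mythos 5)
Your proposal is correct and follows essentially the same route as the paper: the forward direction via the comparison $\lvert F^{-1}(y)\rvert\le\lvert F_{(n+k-1)}^{-1}(y)\rvert=2^{k-1}$, and the converse via Lemma~\ref{almost-then-potential-l} together with Corollary~\ref{potential-l}. You in fact supply two details the paper leaves implicit, namely the explicit unrolling injection $x\mapsto\tilde x$ justifying the fiber comparison and the adjustment $l=\max(2,\ell(f))$ to respect the convention $l>1$; both are correct.
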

	
	\begin{proof}
		First, assume $f$ is a potential $(k,n)$-lifting for every $n\geq k$.
        Pick any $n\geq k$ and consider the map
		\[
		F_{(n+k-1)}\colon \F_2^{n+k-1} \to \F_2^n.
		\]
		For every $y\in\F_2^n$, we have that
		\[
		\lvert F^{-1}(y)\rvert \leq \lvert F_{(n+k-1)}^{-1}(y)\rvert = 2^{k-1}.
		\]
		Thus, $f$ is an almost lifting.
		
		On the other hand, Lemma~\ref{almost-then-potential-l} in combination with Corollary~\ref{potential-l} implies that an almost lifting is a potential $(k, n)$-lifting for all $n \geq k$.
		
		For the second statement, we note that $S_k=S_{k,1}$, and if $f$ is a potential $(k,n)$-lifting for all $n\geq k$, then $\ell(f)=2^{k-1}$.
	\end{proof}

	\section{Surjective cellular automata}
	
	Let $P_n$ be the set of $n$-periodic doubly infinite (i.e., indexed by $\Z$) bit strings and let $P$ be the set of all periodic doubly infinite bit strings, i.e., $P=\cup_{n\geq 1}P_n$.

    The space $\F_2^\Z=\prod_{i=-\infty}^{\infty}\F_2$ with the product topology is a so-called Cantor space, and in particular, it is compact and metric, and contains $P$ as a dense subspace. A function $F\colon\F_2^\Z\to\F_2^\Z$ is called a cellular automaton if it is continuous and shift-invariant. Clearly, a cellular automaton $F$ restricts to a shift-invariant map $P_n\to P_n$ for all $n\geq 1$, and to a shift-invariant continuous map $P\to P$. Moreover, $F$ is called reversible if there exists a cellular automata $G$ such that $F\circ G=G\circ F=I$. It is known that a cellular automaton is reversible if and only if it is bijective \cite{hedlund}.
	
	Let $f$ be Boolean function of diameter $k$, $w\in\Z$, and let $F\colon\F_2^\Z\to\F_2^\Z$ be the map 
	\[
	F(x)_{i+w} = f(x_i,x_{i+1},\dotsc,x_{i+k-1}),
	\]
	that is, cell~$i+w$ of the state after $F$ is applied depends on the $k$-cells $i,i+1,\dotsc,i+k-1$ of the previous state. Then $F$ is a cellular automaton and every cellular automaton $F$ is defined by such a local rule $f$. If $w$ is nonzero, we can replace $F$ by $F\circ \sigma^{-w}$, so it suffices to study the case $w=0$.

\begin{remark}
    	Let $f\colon\F_2^k \to \F_2$ be a Boolean function of diameter $k$.
		Then the following are equivalent \cite[Theorem~7]{kari-survey}:
		\begin{itemize}
            \item[(i)] $F\colon P\to P$ is injective,
			\item[(ii)] $F\colon \F_2^\Z\to \F_2^\Z$ is injective,
   			\item[(iii)] $f$ is a proper lifting.
		\end{itemize}
\end{remark}
 
	\begin{theorem}
		Let $f\colon\F_2^k \to \F_2$ be a Boolean function of diameter $k$.
		Then the following are equivalent:
		\begin{itemize}
			\item[(i)] $F\colon P\to P$ is surjective,
			\item[(ii)] $F\colon \F_2^\Z\to \F_2^\Z$ is surjective,
			\item[(iii)] $F_{(m)}\colon \F_2^m\to \F_2^{m-k+1}$ is surjective for all $m\geq k$,
			\item[(iv)] $f$ is an almost lifting.
		\end{itemize}
	\end{theorem}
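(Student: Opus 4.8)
My plan is to establish the four conditions as equivalent by running the cycle $(iv)\Rightarrow(iii)\Rightarrow(ii)\Rightarrow(i)\Rightarrow(iv)$, with only the closing arrow requiring nontrivial input. The implication $(iv)\Rightarrow(iii)$ is immediate from Theorem~\ref{main-theorem}: an almost lifting is a potential $(k,n)$-lifting for every $n\geq k$, so each $F_{(m)}$ is balanced, meaning every fiber has size exactly $2^{k-1}$; in particular every fiber is nonempty, so $F_{(m)}$ is surjective for all $m\geq k$.

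For $(iii)\Rightarrow(ii)$ I would argue by compactness of $\F_2^\Z$. Fix $\tilde y\in\F_2^\Z$ and let $C_m$ be the set of $\tilde x\in\F_2^\Z$ whose image under $F$ agrees with $\tilde y$ on the central window of length $m$; this set is closed, and the family $(C_m)_m$ is nested. Applying surjectivity of $F_{(m')}$ for suitably large $m'$ produces a finite input realizing that window, which extends to a bi-infinite configuration in $C_m$, so each $C_m$ is nonempty. By the finite intersection property on the compact space $\F_2^\Z$, the intersection $\bigcap_m C_m$ is nonempty, and any element is a preimage of $\tilde y$; hence $F$ is surjective on $\F_2^\Z$. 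The step $(ii)\Rightarrow(i)$ I would treat as a pumping argument in a finite graph. Given $\tilde y\in P_n$, choose by $(ii)$ a bi-infinite preimage $\tilde x$, and record for each index $i$ the window $(x_i,\dots,x_{i+k-2})\in\F_2^{k-1}$ together with $i\bmod n$. There are only $n\cdot 2^{k-1}$ such colored states, so two indices $i<j$ with $i\equiv j\pmod n$ share one; repeating the block $(x_i,\dots,x_{j-1})$ yields a configuration in $P_{j-i}$, and since the windows match at the seam and $n\mid(j-i)$, this periodic configuration maps under $F$ to the $n$-periodic string $\tilde y$. Thus $\tilde y\in F(P)$.

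Finally, $(i)\Rightarrow(iv)$ carries the real content. The passage $(i)\Rightarrow(ii)$ is soft: $F(P)=P$ is dense in $\F_2^\Z$ while $F(\F_2^\Z)$ is closed as the continuous image of a compact space, so $F(\F_2^\Z)=\F_2^\Z$. The substantive step is then to pass from surjectivity of the cellular automaton to balancedness of every $F_{(m)}$, which is precisely Hedlund's theorem that a surjective one-dimensional cellular automaton is balanced \cite{hedlund} (see also \cite{kari-survey}); once balancedness is in hand, $f$ is a potential $(k,n)$-lifting for all $n$, and Theorem~\ref{main-theorem} gives that $f$ is an almost lifting.

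I expect this surjective-implies-balanced step to be the main obstacle, and I do not see how to extract it from the paper's own machinery. The amplification lemma only shows that a non-balanced $F_{(m)}$ forces the fiber sizes of $F_{(rm)}$ to grow without bound, which is an upper-bound phenomenon; surjectivity, by contrast, only asserts the lower bound that every fiber is nonempty and places no a priori cap on fiber sizes, so it cannot be fed into Corollary~\ref{potential-l}. Consequently this last direction genuinely relies on the classical surjectivity theory for cellular automata rather than on Theorem~\ref{main-theorem} alone, and the new content of the statement is exactly the identification of balancedness with the almost-lifting property supplied by that theorem.
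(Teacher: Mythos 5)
Your proof is correct, and the cycle you run, $(iv)\Rightarrow(iii)\Rightarrow(ii)\Rightarrow(i)\Rightarrow(iv)$, is the reverse of the paper's $(i)\Rightarrow(ii)\Rightarrow(iii)\Rightarrow(iv)\Rightarrow(i)$; the two soft implications (compactness/density between $P$ and $\F_2^\Z$, and the pigeonhole/pumping step between finite windows and periodic configurations) appear in both, just attached to different arrows. The genuine difference is where you place the hard content. You reduce it to Hedlund's balance theorem for surjective cellular automata, which is a legitimate citation (the paper itself remarks that parts of the theorem follow from \cite{hedlund}), and your diagnosis that Corollary~\ref{potential-l} cannot absorb a surjectivity hypothesis is accurate. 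But your claim that the surjective-implies-balanced step cannot be obtained elementarily from the paper's setup is too pessimistic: the paper closes exactly this gap with a short self-contained counting argument in its $(iii)\Rightarrow(iv)$ step. Since the average fiber size of $F_{(m)}\colon\F_2^m\to\F_2^{m-k+1}$ is exactly $2^{k-1}$, failure of balancedness forces some $y$ with $\lvert F_{(m)}^{-1}(y)\rvert\leq 2^{k-1}-1$; the set $S_r$ of outputs of length $rm-k+1$ obtained by interleaving $r$ copies of $y$ with arbitrary $(k-1)$-bit blocks has $2^{(r-1)(k-1)}$ elements, while its preimage under $F_{(rm)}$ has at most $(2^{k-1}-1)^r$ elements because each length-$m$ block of the input must lie in $F_{(m)}^{-1}(y)$, so once $(1-2^{-(k-1)})^r<2^{-(k-1)}$ the map $F_{(rm)}$ fails to be surjective. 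This makes the whole theorem self-contained modulo Theorem~\ref{main-theorem}, whereas your version buys brevity at the cost of importing the classical surjectivity theory; both are valid, but you should either supply this counting argument or state explicitly that the balance theorem is being used as a black box.
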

	
	\begin{proof}
		By Theorem~\ref{main-theorem}, $f$ is an almost lifting if and only if $f$ is a potential $(k, n)$-lifting for any $n \geq k$, which is equivalent to $F_{(m)}$ being balanced for any $m$, $m \geq k$.
		
		(i)~$\Longrightarrow$~(ii):
		Since $\F_2^\Z$ is compact and $P$ is dense in $\F_2^\Z$, if $F(\F_2^\Z)$ contains $P$ it must contain all of $\F_2^\Z$ (this is also explained in \cite[Theorem~5 and~6]{schoone-daemen}).
		
		(ii)~$\Longrightarrow$~(iii):
		Pick $y\in \F_2^{m-k+1}$, and expand it to an element of $y'\in\F_2^\Z$ by setting $y'_i = y_j$ for $i \equiv j\, (\operatorname{mod} m-k+1)$. Find $x'\in\F_2^\Z$ such that $F(x')=y'$, and define $x\in\F_2^m$ by $x_i=x'_i$. 
		Then $F_{(m)}(x)=y$.
		
		(iii)~$\Longrightarrow$~(iv):
		Suppose that $f$ is not an almost lifting. Then there exists some $m$, $m \geq k$ such that $F_{(m)}$ is not balanced. It follows that there exists some bitstring $y$ of length $m-k+1$ such that $\lvert F_{(m)}^{-1}(y) \rvert \leq 2^{k-1}-1$. For a positive integer $r$, let $S_r$ denote the set of bitstrings $y'$ of length $rm-k+1$ consisting of $y$, then any $k-1$ bits, then $y$, then any $k-1$ bits, and so on. There are $2^{(r-1)(k-1)}$ elements in $S_r$, but at most $\left(2^{k-1} - 1\right)^r$ elements in $\lvert F_{(rm)}^{-1}(S_r) \rvert$. Thus, if $r$ is large enough that $\left(1 - \frac{1}{2^{k-1}} \right)^r < \frac{1}{2^{k-1}}$, then $F_{(rm)}$ is not surjective.
		
		(iv)~$\Longrightarrow$~(i):
		Suppose $f$ is an almost lifting. Let $y$ be any finite bitstring of some length $n \geq k$, and let $m=2^k n+k-1$. Since $F_{(m)}$ is balanced, it is surjective, so there exists $x \in \F_2^m$ such that $F_{(m)}(x)=yy \dotsc y$. Note that $y$ is determined by a substring of $x$ of length $n+k-1$. Because there are only $2^k$ distinct strings of length $k$, there must exist $i, j \in \{0, n, \dotsc, 2^k n\}$ such that $i<j$ and $x_i = x_j, x_{i+1} = x_{j+1}, \dotsc, x_{i+k-1} = x_{j+k-1}$. Let $x' \in P$ of period $j-i$ be given by $x'_l = x_l$ for $i \leq l \leq j - 1$. Then we have
		$F(x')=\dotsc y y \dotsc$. Thus, $F\colon P \to P$ is surjective.
	\end{proof}
	
	We remark that some of the above could also be deduced from \cite[Section~5]{hedlund}.

	\section{Desirable properties for almost bijectivity}
	
	We would like to find Boolean functions that are non-bijective shift-invariant functions with preferably these properties for all $n$:
	\begin{itemize}
	\item[(P1)] the number of collisions $\max_y \lvert F^{-1}(y)\rvert$ should be low,
	\item[(P2)] (size of the image of $F$)/(size of the codomain of $F$) should be high,
	\item[(P3)] the image $\operatorname{Im}(F)$ and its complement should be unstructured in $\F_2^n$.
	\end{itemize}

	Moreover, to have applications in cryptography, almost bijective functions should otherwise have good properties with respect to differential uniformity, nonlinearity, algebraic degree, etc., that will be discussed in the next section.
	
	First, regarding (P1), we already know from the previous section that if $f$ is an almost lifting, then $\ell_n(f)\leq 2^{k-1}$ for all $n\geq k$. In this case, computer experiments suggest that the collision number pattern, that is, the sequences $\big(\ell_n(f)\big)_{n\geq k}$ are sometimes periodic and sometimes nonperiodic, and often take values a bit lower than $2^{k-1}$.
	
	\medskip
	
	To investigate (P2), we define the distribution of the sizes of preimages by letting $c_{j,n}(f)$, for $n\geq k$ and $j=0,1,2,\dotsc,2^n$, be given by
	\[
	c_{j,n}(f) = \Big\lvert \{ y\in\F_2^n : \lvert F^{-1}(y)\rvert = j \} \Big\rvert,
	\]
	and one would typically say the distribution is good if $c_{0,n}(f)$ is small and $c_{1,n}(f)$ is large, relative to $2^n$, which again should mean that all $c_{j,n}(f)$ for $j\geq 2$ are small.
	
	Moreover, note that we have
	\[
	\frac{2^n}{2^n-c_{0,n}(f)} \leq \ell_n(f) \leq c_{0,n}(f) + 1
	\quad\text{ and }\quad
	\ell_n(f) -1 \leq c_{0,n}(f) \leq 2^n\left(1-\frac{1}{\ell_n(f)}\right).
	\]
	These are derived from considering the extreme cases with either only one instance or $2^n-c_{0,n}(f)$ instances of $\lvert F^{-1}(y)\rvert = \ell_n(f)$, and $c_{0,n}(f)$ instances of $\lvert F^{-1}(y)\rvert = 0$ and $\lvert F^{-1}(y)\rvert = 1$ otherwise.
	
	Given $f$, let $\iota(n)=\Big\lvert \{ y\in\F_2^n : \lvert F^{-1}(y)\rvert = 0 \} \Big\rvert=c_{0,n}(f)$. Clearly, if $f$ is a proper $(k, n)$-lifting, then $\iota(n)=0$. We are interested in functions $f$ for which $\iota(n)$ is not identically 0, but is bounded by some slowly growing function.
	
	\begin{proposition}\label{virtual}
		Given a positive integer $d$, let $f\colon\F_2^{d+1}\to\F_2$ be the function of algebraic degree $d$, given by $f(x_1,\dotsc,x_{d+1}) = x_1 \oplus x_2 \dotsm x_d (x_{d+1} \oplus 1)$. Then, for $n > d$,
		
		\begin{equation}\label{eq:virtual}
			\iota(n) = 
			\begin{cases}
				d \cdot 2^{\frac{n}{d}-1} & \text{if }d|n,\\
				0 & \text{otherwise}\\ 
			\end{cases} 
		\end{equation}
	\end{proposition}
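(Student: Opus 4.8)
The plan is to compute $|F^{-1}(y)|$ for every $y\in\F_2^n$ by exploiting that $f$ is permutive in its first variable. Writing $f(x)=x_1\oplus x_2\cdots x_d\,\overline{x_{d+1}}$, any preimage $x$ of $y$ must satisfy the cyclic recurrence $x_i=y_i\oplus x_{i+1}\cdots x_{i+d-1}\,\overline{x_{i+d}}$ (indices mod $n$); equivalently $x_i=y_i$ unless the window $(x_{i+1},\dots,x_{i+d})$ equals $\omega:=(1^{d-1},0)$, in which case $x_i=\overline{y_i}$. Read backward, any window $(x_{i+1},\dots,x_{i+d})$ determines $x_i$, so $F^{-1}(y)$ is exactly the set of cyclic solutions of this recurrence, which I will count directly.

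The key step is to describe $y=F(x)$ blockwise. Writing a configuration $x\neq 1^n$ cyclically as a concatenation of blocks $0\,1^{G}$ (a zero followed by a maximal run of $G\ge 0$ ones), I claim $F$ acts \emph{locally}, sending a block of length $\ell=G+1$ to the canonical pattern
\[
\pi(\ell)=
\begin{cases}
0\,1^{\ell-1} & 1\le\ell\le d-1,\\
1^{d} & \ell=d,\\
0\,1^{\ell-1-d}\,0\,1^{d-1} & \ell\ge d+1,
\end{cases}
\]
which depends only on $\ell$: the correction term fires inside a block at most once, at the position dictated by the run length, and the block boundary supplies the trailing zero. Since $\pi$ is injective, the preimages of $y$ with at least one zero correspond bijectively to the cyclic parsings of $y$ into canonical patterns, while $y=1^n$ gains the extra preimage $x=1^n$. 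Hence $|F^{-1}(y)|=P(y)+[\,y=1^n\,]$, where $P(y)$ is the number of such parsings, and $\iota(n)=\#\{y:P(y)=0,\ y\neq 1^n\}$ reduces to counting cyclic words admitting no canonical parsing. A useful anchor is $y=1^n$: its parsings use only the tile $1^d$, so when $d\mid n$ they are the $d$ rotations of the period-$d$ configuration $(1^{d-1}0)^{n/d}$, which together with $x=1^n$ give a fiber of size $d+1$; this already exhibits the origin of both the divisibility $d\mid n$ and the factor $d$.

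To count the non-parseable words I would phase the parsing condition as acceptance by a finite automaton on the de Bruijn graph $\F_2^d$, equivalently via $|F^{-1}(y)|=\operatorname{tr}(M_{y_1}\cdots M_{y_n})$ where $M_0+M_1$ is the de Bruijn adjacency matrix; here $M_b$ is the functional matrix of the backward step, and one checks that its underlying map contracts everything toward $0^d$ when $b=0$, while for $b=1$ it fixes $1^n$'s state $1^d$ and rotates a $d$-cycle through the single-zero windows (the states realizing $(1^{d-1}0)^{n/d}$). It is this period-$d$ orbit that forces the $d$-th–root-of-unity behaviour of the transfer operator: when $d\nmid n$ the orbit never closes up, every $y$ is parseable (so $\iota(n)=0$ and $F$ is a bijection, generalizing the odd-$n$ case of $\chi$), whereas when $d\mid n$ the obstructions are parametrized by a residue in $\Z/d\Z$ together with a binary string of length $n/d$ subject to one linear constraint, giving $\iota(n)=d\cdot 2^{n/d-1}$.

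The main obstacle is this last enumeration. The canonical parsing is \emph{not} unique, because a run of ones can be realized either as the interior $1^h$ of a $0\,1^{h}\,0\,1^{d-1}$ tile or as a tail followed by several $1^d$ tiles; this genuine freedom is exactly what produces the fibers of size larger than one, so the bookkeeping of which words admit no parsing—and the verification that their number is \emph{precisely} $d\cdot 2^{n/d-1}$ rather than merely $O(2^{n/d})$—is delicate, especially in controlling runs of zeros and the cyclic closure. As a cross-check I would use the global balance $c_{0,n}(f)=\sum_{j\ge 2}(j-1)\,c_{j,n}(f)$, which follows from $\sum_y|F^{-1}(y)|=2^n$ and relates the empty fibers directly to the heavy ones, thereby pinning down the constant.
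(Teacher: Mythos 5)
Your reduction is correct and is a genuinely different bookkeeping from the paper's. The blockwise description of $F$ does hold: for $x\neq 1^n$, each cyclic block $0\,1^{\ell-1}$ of $x$ maps to your pattern $\pi(\ell)$ (the correction term fires at most once per block, at distance $d$ before the next zero), $\pi$ is injective, and hence $\lvert F^{-1}(y)\rvert=P(y)+[\,y=1^n\,]$ with $P(y)$ the number of cyclic parsings of $y$ into the tiles $\pi(\ell)$. The paper instead works directly with $Y=\{i:y_i=0\}$ and builds preimages greedily from a choice of marked zeros $\alpha_1<\dotsb<\alpha_j$ subject to congruence conditions modulo $d$; the two pictures encode the same combinatorics, and yours is arguably the more transparent reformulation.

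But there is a genuine gap: the decisive step is exactly the one you defer. To get \eqref{eq:virtual} you must prove that $P(y)=0$ precisely when the zeros of $y$ all lie in a single residue class modulo $d$ and are odd in number (the paper's condition $(*)$ together with $\lvert Y\rvert$ odd), and you only assert the resulting parametrization ``a residue in $\Z/d\Z$ plus a length-$n/d$ string with one linear constraint'' without deriving it. The de Bruijn/transfer-matrix sketch identifies the relevant period-$d$ orbit but establishes neither direction of the characterization: not that every $y$ violating the congruence condition is parseable (in the paper this is the backward traversal of the $\beta_i$'s, which needs care to close up cyclically), and not that congruence plus odd parity forces non-parseability (the paper's ``every other $\beta_i$ must be an $\alpha_i$'' parity argument). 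You yourself flag this enumeration as the ``main obstacle,'' and the proposed cross-check $c_{0,n}(f)=\sum_{j\ge 2}(j-1)\,c_{j,n}(f)$ cannot pin down $\iota(n)$ without an equally hard analysis of the heavy fibers. As written, the proposal is a correct reformulation plus a statement of the answer, not a proof.
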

	
	\begin{proof}
		Take any $y \in \F_2^n$ and set $Y=\{ i : y_i=0 \}=\{ \beta_1<\beta_2<\dotsb<\beta_{\lvert Y\rvert} \}$. If $\beta_{i+1}-\beta_i\equiv 0\, (\operatorname{mod} d)$ for all $1\leq i\leq\lvert Y\rvert$, where $\beta_{\lvert Y\rvert+1}:=n+\beta_1$, then we say that $y$ satisfies ($*$). In particular, we note that if $y$ satisfies ($*$), then $d$ must divide $n$.
		
		Suppose $1 \leq \alpha_1 < \dotsc < \alpha_j \leq n$ are integers such that conditions (i)-(iv) are satisfied. The indices are considered modulo $j$ such that $\alpha_{j+1}$ is $\alpha_1$, and the set $\{\alpha_i+1, \dotsc, \alpha_{i+1}-1\}$ for $i=j$ should be read as $\{\alpha_j+1, \dotsc, n\} \cup \{1, \dotsc, \alpha_1-1\}$.
		\begin{itemize}
			\item[(i)] $y_{\alpha_i} = 0$ for all $i$.
			\item[(ii)] For each $i$, there is at most one element $l \in \{\alpha_i+1, \dotsc, \alpha_{i+1}-1\}$ such that $y_l=0$.
			\item[(iii)] If $\alpha_i \equiv \alpha_{i+1}\, (\operatorname{mod} d$), then it is required that such an element $l$ exists.
			\item[(iv)] If there is indeed such an element $l$, then it is required that $l \equiv \alpha_{i+1}\, (\operatorname{mod} d$).
		\end{itemize}
		Then there exists $x \in \F_2^n$ such that $F(x)=y$. Indeed, we can start with $x=y$ and make the following modifications for each $i \in \{1, \dotsc, j\}$. If there is no $l \in \{\alpha_i+1, \dotsc, \alpha_{i+1}-1\}$ such that $y_l=0$, shift the values of $x_{\alpha_{i+1}-d}, x_{\alpha_{i+1}-2d}, \dotsc$ until the end of the interval $(\alpha_i, \alpha_{i+1})$ is reached. If there is such an $l$, stop at that index. Note that $y_{\alpha_i} = f(x_{\alpha_i}, \dotsc, x_{\alpha_i+d}) = x_{\alpha_i} = 0$ for all $i$.
		
		Suppose first that $y$ does not satisfy $(*)$.
		If $m \geq 1$, there exists an $i$ such that $d$ does not divide $\beta_{i+1}-\beta_i$. We will let $\beta_i$ be one of the $\alpha$'s. Now traverse the $\beta$'s backwards (i.e., consider $\beta_{i-1}, \beta_{i-2}, \dotsc$ in turn) in search of new $\alpha$'s. Whenever the current $\beta$ is not congruent to the last added $\alpha$, add it. Otherwise, skip it and add the next $\beta$ regardless. Because of the starting condition, there will not be a conflict when we come back to where we started. An example: $n=10$, $d=3$, $y = (0, 1, 1, 0, 1, 0, 0, 1, 0)$. We have $\{\beta_i \}= \{1, 4, 6, 7, 9\}$. Since 6 and 7 are not congruent modulo 3, we can let 6 be an $\alpha$. Going backwards, we add 4, since 4 and 6 are not congruent. Now we skip 1 because 1 and 4 are congruent modulo 3 and get 9, and then 7. So $\{\alpha_i\} = \{4, 6, 7, 9\}$.
		
		Assume next that $y$ satisfies $(*)$ and $\lvert Y\rvert$ is odd.
		Then $\beta_{i+1} - \beta_i \equiv 0\, (\operatorname{mod} d)$ for each $i$,
		so by condition~(iii) it follows that every other $\beta_i$ is an $\alpha_i$, i.e.,
		exactly half of the $\beta_i$'s is an $\alpha_i$,
		but this is not possible if $\lvert Y \rvert$ is odd.
		The number of such elements $y$ is equal to $d$ (the number of residue classes) times the number of subsets of $\{1, \dotsc, \frac{n}{d}\}$ with an odd number of elements, which is $2^{\frac{n}{d}-1}$, and this is now an upper bound for $\iota(n)$ when $d|n$, while $\iota(n)$ must be $0$ otherwise.
		
		Finally, we suppose that $y$ satisfies $(*)$ and $\lvert Y\rvert$ is even.
		The indices $i$ such that $y_i=0$ are congruent modulo $d$,
		and we usually get two distinct possibilities for $x$ using the same method: If $y_i=0$ for $i \in \{\beta_1, \dotsc, \beta_{2j}\}$, then we can take either $\alpha_i = \beta_{2i-1}$ for all $i$ or $\alpha_i = \beta_{2i}$ for all $i$. The exception is $y=(1, \dotsc, 1)$, for which $x$ can either be equal to $y$, or $x_i=0$ for all $i$ in any given residue class modulo $d$ and $x_i=1$ otherwise. Since every additional inverse in this case removes an inverse from the previous case, it follows that $\iota(n)$ is at least $d(2^{\frac{n}{d}-1}-1)+d=d \cdot 2^{\frac{n}{d}-1}$.
	\end{proof}

	The functions given in Proposition~\ref{virtual} generalize the $\chi$ function (up to elementary equivalence). Computer experiments indicate that the values for $c_{0,n}(f)$ given in \eqref{eq:virtual} are lowest possible for almost liftings that do not induce a bijection for all $n$. We have checked all functions up to $k=5$ and also some classes for $k=6$, for $n\leq 20$, and Proposition~\ref{virtual} shows that such functions exist for all these $k$'s. We therefore conjecture that this bound is indeed optimal, and make the following definition.
	
	\begin{definition}
		A nonlinear function $f$ of diameter $k$ and $\operatorname{deg}(f)=d<k$ is called a \emph{virtual lifting} if it satisfies condition \eqref{eq:virtual} for all $n\geq k$.
	\end{definition}
 
	A complete list of almost liftings for $k\leq 5$ satisfying \eqref{eq:virtual} for all $n\leq 20$ is given in Appendix~\ref{appendix:virtual}. We believe they are all virtual liftings. Moreover, a complete list of Boolean functions for $k\leq 5$ that induce bijections for all $n$ is given in Appendix~\ref{appendix:proper} (the proof will be given in a forthcoming paper). We call such functions proper liftings.
    
	\medskip
	
	Furthermore, for (P3) and structuredness of the image, one can look at properties such as balancedness, strict avalanche, and collision difference. Balancedness for a given $n$ is defined by
	\[
	\max_{v\neq 0} \lvert\sum_{x\in\F_2^n}(-1)^{v\cdot F(x)}\rvert,
	\]
	and is $0$ if $F$ is bijective, and otherwise says something about how the outputs may accumulate around certain vectors.
	The strict avalanche criterion (the effect of changing one input; the best is if it flips half of the outputs) is given for each $v\neq 0$ and $1\leq i\leq n$ by setting $(v\cdot F)_i(x)=(v\cdot F)(x)\oplus(v\cdot F)(x\oplus e_i)$ and then compute
	\[
	\max_{1\leq i\leq n,\, v\neq 0} \lvert\sum_{x\in\F_2^n}(-1)^{(v\cdot F)_i(x)}\rvert.
	\]
    Finally, we would like the probability of differentials that imply a collision to be small, so we define the collision difference as
    \[
    \max_{v\neq 0}\lvert\{ x\in \F_2^n : F(x)=F(x\oplus v)\}\rvert.
    \]
 
	We think that balancedness, strict avalanche, and collision difference play a role for non-bijective functions, since we would like them to ``spread things out'' as most as possible.
	
	\section{Desirable cryptographic properties}
	
	Good cryptographic properties generally include aspects such as algebraic degree, nonlinearity, differential uniformity, and differential branch number. It is also desirable that the Boolean function has a fairly simple polynomial expression, to achieve low computational complexity.
	
	
	\medskip
	
	First, the differential probability of $F$ is defined for $a,b\in\F_2^n$ by
	\[
	\operatorname{DP}(a,b)=\frac{1}{2^n}\lvert\{ x\in\F_2^n : F(x\oplus a)\oplus F(x)=b \}\rvert.
	\]
	The differential probability uniformity $\operatorname{DPU}(F)$ is then $\max\{ \operatorname{DP}(a,b) : a,b\in\F_2^n,\, a\neq 0 \}$ and we want this to be low. The differential uniformity $\operatorname{DU}(F)$ is $2^n$ times this.

\begin{lemma}
    The differential uniformity of $F$ does not depend on the linear terms of $f$. Moreover, if $m$ is the number of variables that $f$ depends upon nonlinearly, then the differential uniformity of $F$ is at least $2^{n-m}$.
\end{lemma}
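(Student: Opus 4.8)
The plan is to treat the two assertions in turn, using the first to streamline the second. Throughout, write the algebraic normal form of $f$ as $f = f_{\mathrm{aff}} \oplus g$, where $f_{\mathrm{aff}}(x) = c_0 \oplus \bigoplus_i \ell_i x_i$ collects the constant and degree-one terms and $g$ collects all monomials of degree at least two. Since the $i$-th coordinate of $F$ is $f$ evaluated on the window $(x_i,\dotsc,x_{i+k-1})$, the induced maps split coordinatewise as $F = A \oplus G$, where $A\colon\F_2^n\to\F_2^n$ is the affine map induced by $f_{\mathrm{aff}}$, so that $A(x) = Mx \oplus v$ for some matrix $M$ and vector $v$, and $G$ is induced by $g$.

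For the first assertion I would compute the derivative in a direction $a\neq 0$. Since $D_a A(x) = A(x\oplus a)\oplus A(x) = Ma$ is independent of $x$, we get $D_a F(x) = Ma \oplus D_a G(x)$. Hence for every $b$ the fibre $\{x : D_aF(x)=b\}$ coincides with $\{x : D_a G(x) = b \oplus Ma\}$, so the maps $D_aF$ and $D_aG$ have exactly the same multiset of fibre sizes; maximising over $b$ and then over $a\neq 0$ shows that the differential uniformity of $F$ equals that of $G$. As $G$ does not involve $f_{\mathrm{aff}}$ at all, the differential uniformity is unaffected by the linear and constant terms of $f$.

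For the lower bound, the first assertion lets me replace $F$ by $G$, i.e.\ assume $f = g$ has no terms of degree $\le 1$, so that $f$ depends on exactly the $m$ variables it depends on nonlinearly; call their positions $p_1 < \dotsb < p_m$ in $\{1,\dotsc,k\}$. Then $G(x)_i$ depends only on $x_{i+p_1-1},\dotsc,x_{i+p_m-1}$, indices taken modulo $n$. I would now take the single-coordinate difference $a = e_1$. The coordinate $D_{e_1}G(x)_i$ is identically zero unless flipping $x_1$ changes the $i$-th window at a nonlinear position, that is, unless $1 = i + p_s - 1$ for some $s$; this happens for at most $m$ values of $i$. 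Consequently the image of the map $x\mapsto D_{e_1}G(x)$ is contained in the coordinate subspace supported on those $\le m$ indices, so it has at most $2^m$ elements. Since the domain has $2^n$ elements, the pigeonhole principle yields some $b$ with $\lvert\{x : D_{e_1}G(x)=b\}\rvert \ge 2^{n-m}$; as $e_1\neq 0$ this is a valid differential, giving $\mathrm{DU}(G)\ge 2^{n-m}$ and hence $\mathrm{DU}(F)\ge 2^{n-m}$.

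The routine parts are the coordinatewise splitting and the index bookkeeping modulo $n$; the step carrying the real content is recognising that a single-bit input difference propagates to only $m$ output coordinates once the affine part is discarded, which is exactly what forces the image of the derivative to be small. The main things to watch are that the $\le m$ affected coordinates are counted correctly, the positions $2 - p_s$ being distinct modulo $n$ because $p_m - p_1 < k \le n$, and that the reduction via the first assertion is used to make the image a genuine linear subspace rather than merely an affine one; the pigeonhole bound of $2^m$ in fact holds in the affine case too, so this reduction is a convenience rather than a necessity.
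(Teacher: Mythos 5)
Your proof is correct and follows essentially the same route as the paper: the first part rests on the observation that the affine part of $f$ only shifts the derivative $D_aF$ by a constant, and the second on the fact that a single-coordinate input difference leaves $n-m$ coordinates of the derivative constant, so $D_{e_1}F$ takes at most $2^m$ values and pigeonhole gives the bound. The only cosmetic differences are that you strip the whole affine part at once (the paper removes linear terms one at a time) and that you invoke the first assertion to make those $n-m$ derivative coordinates identically zero rather than merely constant, a convenience you correctly note is not needed.
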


\begin{proof}
    If we replace $f(x)$ by $f(x) \oplus x_j$ for some $j$, then $F(x \oplus y) \oplus F(y)$ is replaced by $F(x \oplus y) \oplus F(y) \oplus \left(x_j, x_{j + 1}, \dotsc, x_{j - 1} \right)$, and we get the same value for the differential uniformity. In other words, the differential uniformity is independent of linear terms.

    If $f$ depends nonlinearly on $m$ variables, then $n-m$ of the coordinate functions of $F$ depend linearly of, say $x_n$. Therefore, $n-m$ of the coordinate functions of $F(x\oplus (0,\dotsc,0,1))\oplus F(x)$ are $0$ or $1$, constantly, so $F(x\oplus (0,\dotsc,0,1))\oplus F(x)$ takes at most $2^m$ values. That is, one of the values must be reached at least $2^{n-m}$ times, so the differential uniformity must be at least $2^{n-m}$.   
\end{proof}

\begin{remark}\label{apn-lifting}
        There are many functions for which the bound $2^{n-k}$ is achieved, even for $n=k+1$. E.g., if $f(x)=x_1x_2$, then the differential uniformity of the induced function is $2$ for $n=2$ and $2^{n-2}$ for $n \geq 3$ (checked up to $n = 10$).
        
		On the other hand, we have verified with computer assistance that among the almost liftings of diameter $k=3,4,5$, the lowest possible differential uniformity appears to be $2^{n-k+1}$ for all $n\geq k$. Recall that a vectorial Boolean function $F\colon\F_2^n \to \F_2^n$ is called almost perfect nonlinear (APN) if the differential uniformity of $F$ is $2$.
\end{remark}

        In light of the above, we say that a Boolean function $f$ is an \emph{APN~lifting} if for every $n\geq k$, the induced version of $f$ to $F\colon\F_2^n\to\F_2^n$ has differential uniformity at most $2^{n-k+1}$ (in particular, $F$ is an APN~function for $n=k$).
  
        Suppose $f$ is an APN lifting. If $f$ is permutive, on the form $x_1 \oplus g(x_2, \dotsc, x_k)$, say, then $f$ is an almost lifting by Lemma~\ref{permutive}. Therefore, a single permutive APN lifting in general gives $2^{k-2}$ elementary equivalence classes of APN liftings. If $r(x_2, \dotsc, x_k) \neq (x_2, \dotsc, x_k)$, then $x_1 \oplus g \circ r (x_2, \dotsc, x_k)$ is also an APN lifting, giving a total of $2^{k-1}$ elementary equivalence classes. Thus, the $2$, $8$ and $16$ elementary equivalence classes, respectively of APN liftings that we found among all almost liftings for $k=3, 4, 5$ come from adding linear terms to one single permutive APN lifting for each $k$, namely $x_1 \oplus x_2 x_3$, $x_1 \oplus x_2 (x_3 \oplus x_4)$ and $x_1 \oplus x_2 (x_3 \oplus x_4 \oplus x_5) \oplus x_3 x_5$. Note that each one is of algebraic degree 2 (for more on shift-invariant APN~functions, see \cite[Section~4.2]{carlet-APNRS}).

        We have also searched through all almost liftings of degree 2 for $k=6$, and in this case the function $x_1 \oplus x_2 (x_3 \oplus x_5) \oplus x_3 (x_4 \oplus x_5 \oplus x_6) \oplus x_4 (x_5 \oplus x_6)$ yields the 32 elementary equivalence classes that were found.

        Finally, we searched through all permutive almost liftings of degree 2 for $k=7$, and in this case there are 640 elementary equivalence classes, obtained by adding linear terms to essentially 10 different functions.

	\medskip
	
	The nonlinearity of a Boolean function $f$ is the minimum Hamming distance between $f$ and affine functions. We shall denote it by $\operatorname{nl}(f)$.
	To protect against certain linear attacks, the nonlinearity of $F$ is given by $\operatorname{NL}(F)=\min_{v\neq 0}\operatorname{nl}(v\cdot F)$, and we have (see \cite[Definition~29]{carlet-book})
	\[
	2\operatorname{NL}(F)=2^n-\max_{a,b,b\neq 0} \Big\lvert\sum_{x\in\F_2^n}(-1)^{a\cdot x+b\cdot F(x)}\Big\rvert.
	\]
    
	Define the correlation for $a,b\in\F_2^n$ by
	\[
	\operatorname{C}(a,b)=\frac{1}{2^n}\sum_{x\in\F_2^n}(-1)^{a\cdot x + b\cdot F(x)}
	\]
	and the linear potential of a linear approximation $(a,b)$ by $\operatorname{LP}(a,b)=\operatorname{C}(a,b)^2$.
	The relationship between correlation and nonlinearity is therefore
	\[
	2\operatorname{NL}(F)+2^n\max_{b\neq 0}\sqrt{\operatorname{LP}(a,b)}=2^n.
	\]
	The linear potential uniformity is then
	\[
	\operatorname{LPU}(F)=\max_{a,b,b\neq 0}\operatorname{LP}(a,b) = \left( 1 - \frac{\operatorname{NL}(F)}{2^{n-1}} \right)^2.
	\]
	
\begin{remark}
In general, we have
\[
\operatorname{NL}(F) \leq 2^{n-k}\operatorname{nl}(f) \leq 2^{n-1}-2^{n-(k+2)/2},
\]
or equivalently that $\operatorname{LPU}(F)\geq 2^{-k}$, where equality can only be obtained for $k$ even and when $f$ is a bent function. Computer experiments suggest that whereas equality is achieved for every $n$ and even $k\leq n$, the function $f(x)=x_1x_2$ modulo linear terms for $k=2$ might be the only function achieving the bound for every $n$.

From \cite[Theorem~4]{charpin-peng} it follows that if $F$ is differentially two-valued, then
\begin{equation}\label{nl-du}
\operatorname{NL}(F) \leq 2^{n-1} - (2^{n-2}\operatorname{DU}(F))^{1/2},
\end{equation}
so if $\operatorname{NL}(F) \geq 2^{n-1}-2^{n-(k+2-j)/2}$ for some integer $j\geq 0$, then $\operatorname{DU}(F)\leq 2^{n-k+j}$. Note that \eqref{nl-du} is equivalent to $\operatorname{LPU}(F)\geq \operatorname{DPU}(F)$. Moreover, \cite[Theorem~6]{charpin-peng} implies that if $F$ is also quadratic, $n$ is even, and $\operatorname{DU}(F)=2^{n-k+j}$ for some integer $j\geq 1$ such that $k+j$ is odd, then $\operatorname{LPU}(F)\geq 2\operatorname{DPU}(F)$. It remains to be studied whether one can give a meaningful definition of a ``highly nonlinear lifting''.

\end{remark}
	
	Furthermore, the algebraic degree of $F$ is given by $\operatorname{deg}(F)=\max_{v\neq 0}\operatorname{deg}(v\cdot F)$. When $F$ is shift-invariant, this is the same as the algebraic degree of $f_1$. Indeed, for all $1\leq i\leq n$ we clearly have $\operatorname{deg}(f_1)=\operatorname{deg}(f_i)$ and $\operatorname{deg}(F)\geq\operatorname{deg}(f_1)=\operatorname{deg}(f_i)$. Moreover, since $v\cdot F$ are sums of the $f_i$'s we must have $\operatorname{deg}(F)\leq\operatorname{deg}(f_i)$.
		
	\medskip
	
	Finally, we consider the differential branch number, which is given by
	\[
	\min_{x\neq y}\{ \operatorname{wt}(x \oplus y) + \operatorname{wt}(F(x) \oplus F(y)) \},
	\]
	as a measure of how effectively the S-box spreads input differences into output differences.
    
	\section{Selected candidates}\label{sec-selection}
	
	After extensive searching, we now consider a few candidates more closely:
	\begin{itemize}
		\item[(A1)] $f(x)=x_1\oplus x_2(x_3\oplus 1)$
		\item[(A2)] $f(x)=x_1\oplus x_2x_3$
		\item[(B1)] $f(x)=x_1\oplus x_2(x_3\oplus x_4)$
		\item[(B2)] $f(x)=x_1\oplus x_2(x_3\oplus x_4\oplus 1)$
		\item[(B3)] $f(x)=x_1\oplus x_4(x_2\oplus x_3\oplus 1)$
		\item[(C1)] $f(x)=x_2\oplus x_3\oplus x_4(x_1\oplus x_2)(x_3\oplus 1)$
		\item[(C2)] $f(x)=x_1\oplus x_4\oplus x_3(x_2\oplus x_4\oplus x_2x_4)$
		\item[(D1)] $f(x)=x_2 \oplus x_3 ((x_1 \oplus x_2) (x_4 \oplus 1) \oplus x_4 x_5 \oplus 1)$
		\item[(D2)] $f(x)=x_2 \oplus x_3 (x_1 \oplus 1) \oplus x_4 ((x_2 \oplus 1) (x_5 \oplus 1) \oplus x_3 (x_1 \oplus x_5))$
		\item[(D3)] $f(x)=x_2 \oplus x_4 (x_5 \oplus 1) (x_1 \oplus x_3)$
		\item[(E1)] $f(x)=x_2 \oplus x_1 (x_4 (x_3 \oplus 1) \oplus (x_4 \oplus 1) x_5 (x_2 \oplus x_3 \oplus 1))$
	\end{itemize}
	
	\medskip
	
	
	There are (up to elementary equivalences) eight quadratic functions of diameter~$4$ that have differential probability uniformity $\frac{1}{8}$ and linear potential uniformity $\frac{1}{4}$ (checked for $n\leq 10$), six of them having nonperiodic collision number pattern, and we have picked three functions in the (B)~class with sparse ANF, where (B2) and (B3) have a nonperiodic pattern. The only other of the above functions with nonperiodic collision number pattern is (C2). The functions in the (D)~class are virtual liftings, while (E1) is a proper lifting.
	For (A1) and (A2) the differential probability uniformity is $\frac{1}{4}$ for every $n$ that we checked. 
	
	
	Here is a summary of our computations, but be aware that our values for $\operatorname{DPU}$ and $\operatorname{LPU}$ are only checked for $n\leq 9$ or $10$. For some functions the value is indeed constant for each $n\leq 9$, while for other functions there are minor fluctuations around the values given in the table. It also looks like the (P2) values stabilize when $n$ grows for the (B) and (C) functions. For the (A), (D), and (E) functions the values are sometimes (periodically) the ones given, and otherwise $1$. 
	\[
	\begin{array}{|c|c|c|c|c|c|} \hline
		& k & \deg & \operatorname{DPU} & \operatorname{LPU} & \text{(P2) for $n=10$} \\ \hline
		\text{(A1)} & 3 & 2 & 1/4 & 1/4 & .97 \\
		\text{(A2)} & 3 & 2 & 1/4 & 1/4 & .87 \\
		\text{(B1)} & 4 & 2 & 1/8 & 1/4 & .84 \\
		\text{(B2)} & 4 & 2 & 1/8 & 1/4 & .86 \\
		\text{(B3)} & 4 & 2 & 1/8 & 1/4 & .83 \\
		\text{(C1)} & 4 & 3 & 5/16 & 9/16 & .90 \\
		\text{(C2)} & 4 & 3 & 5/16 & 9/16 & .71 \\
		\text{(D1)} & 5 & 3 & 7/32 & 1/4 & .95 \\
		\text{(D2)} & 5 & 3 & 7/32 & 1/4 & .95 \\
		\text{(D3)} & 5 & 3 & 9/32 & 9/16 & .95 \\
		\text{(E1)} & 5 & 4 & 1/4 & 25/64 & 1 \\ \hline
	\end{array}
	\]

    The values for balancedness of (A1) is $2^{n/2+1}$ when $n$ is even, and for (A2) and the three (B) functions it also seems to be approximately $2^{n/2+1}$ for all $n$. For the (D) functions we get $3\cdot 2^{n/3}$ when $n$ is a multiple of $3$. Very rough estimates for (C1) and (C2) are $2^{0.8n}$ and $2^{0.6n}$, respectively. Strong avalanche seems to be approximately $2^{n-3}$, respectively, for both (A1) and the three (B) functions, when $n$ grows and is even.
    
    All the above functions have differential branch number $2$, except (C2), that has $3$.
		
	The final property that we consider is the collision difference, i.e., we would like $\max_{a\neq 0} \operatorname{DP}(a,0)$ to be small. For (A1) this is $2^{-n/2}$ when $n$ is even, and for (A2) and the three (B) functions it is approximately $2^{-2n/3}$ for all $n$.

\medskip

Higher degree and denser ANF improves resistance to certain differential and integral attacks, respectively, but is more expensive. Moreover, the periodic collision pattern are easier to reason about in mode designs. If one wants symmetry and low implementation cost, the (B1) function seems like a good choice, but any (B)~type function is of interest. If non-quadratic is preferable, then (D1) or (D2) for decent cryptographic properties, or (D3) for a more compact description, look like promising options.

\section{Conclusion and further research}

This paper contributes to the ongoing development of efficient cryptographic primitives based on near-permutation nonlinear layers and present new directions of designing S-boxes defined by compact rules using symmetries and cellular automata theory. More specifically, we investigate the cryptographic potential of almost bijective shift-invariant vectorial Boolean functions. 

We explicitly describe the class of Boolean functions, called almost liftings, that we envision as a tool for constructing such S-boxes, and we discuss several examples with good cryptographic properties induced from these functions. Even though the $\chi$ function already has good properties, the advantage of looking at the larger class of almost liftings is to get a broader variety of properties to benefit from, depending on applications. Knowledge about this wider range of possibilities will be significant for design of, e.g., lightweight cryptography. An immediate next step is then co-designing linear layers to compensate sparse ANFs when desired.

For future research, we are going to pursue new theoretical results on differential uniformity and nonlinearity, and tightening image-size estimates for shift-invariant transformations. Further, there is more work to do in computer searching for almost liftings of larger diameters that induce non-bijective S-boxes with good cryptographic properties, and finding new applications in ``near-permutation-based cryptography''.



Finally, the concept of almost liftings, and the equivalence with surjective cellular automata, can be extended to $\F_p^k\to\F_p$ for all fields of characteristic $p>2$, i.e., one gets that $\sup_n\ell_n(f)\leq p^{k-1}$ or it is infinite. In fact, since there is no algebra involved, we may as well look at functions $\{0,1,\dotsc, p-1\}^k\to\{0,1,\dotsc, p-1\}$ for any integer $p>2$. Grassi discusses non-bijective shift-invariant transformations over odd prime fields \cite{Grassi}.

\newpage
    
	\appendix
	
	\section{Counting the number of liftings}\label{liftings}
	
	Tables for $k=3,4,5$ and in part for $k=6$ (number of elementary equivalence classes)

    $k=3$
	\[
	\begin{array}{|c|c|c|c|c|c|} \hline
		n & \#\,\text{potential} & f(0) \neq f(1) & \#\,\text{liftings} & \deg = 1 & \deg = 2 \\ \hline
		3 & 13 & 8 & 6 & 0 & 6 \\
		4 & 5 & 3 & 1 & 1 & 0 \\
		5 & 4 & 2 & 2 & 1 & 1 \\
		6 & 4 & 2 & 0 & 0 & 0 \\
		7 & 4 & 2 & 2 & 1 & 1 \\
		8 & 4 & 2 & 1 & 1 & 0 \\
		9 & 4 & 2 & 1 & 0 & 1 \\
		10 & 4 & 2 & 1 & 1 & 0 \\
		11 & 4 & 2 & 2 & 1 & 1 \\
		12 & 4 & 2 & 0 & 0 & 0 \\
		13 & 4 & 2 & 2 & 1 & 1 \\
		14 & 4 & 2 & 1 & 1 & 0 \\
		15 & 4 & 2 & 1 & 0 & 1 \\
		16 & 4 & 2 & 1 & 1 & 0 \\
		17 & 4 & 2 & 2 & 1 & 1 \\
		18 & 4 & 2 & 0 & 0 & 0 \\
		19 & 4 & 2 & 2 & 1 & 1 \\ \hline
	\end{array}
	\]

    $k=4$
	\[
	\begin{array}{|c|c|c|c|c|c|c|} \hline
		n & \#\,\text{potential} & f(0) \neq f(1) & \#\,\text{liftings} & \deg = 1 & \deg = 2 & \deg = 3 \\ \hline
		4 & 1665 & 887 & 205 & 1 & 12 & 192 \\
		5 & 536 & 281 & 59 & 1 & 6 & 52 \\
		6 & 124 & 64 & 6 & 1 & 3 & 2 \\
		7 & 77 & 39 & 4 & 0 & 0 & 4 \\
		8 & 73 & 36 & 4 & 1 & 0 & 3 \\
		9 & 73 & 36 & 3 & 1 & 0 & 2 \\
		10 & 73 & 36 & 4 & 1 & 0 & 3 \\
		11 & 73 & 36 & 5 & 1 & 0 & 4 \\
		12 & 73 & 36 & 2 & 1 & 0 & 1 \\
		13 & 73 & 36 & 5 & 1 & 0 & 4 \\
		14 & 73 & 36 & 3 & 0 & 0 & 3 \\
		15 & 73 & 36 & 3 & 1 & 0 & 2 \\
		16 & 73 & 36 & 4 & 1 & 0 & 3 \\
		17 & 73 & 36 & 5 & 1 & 0 & 4 \\
		18 & 73 & 36 & 2 & 1 & 0 & 1 \\
		19 & 73 & 36 & 5 & 1 & 0 & 4 \\
		20 & 73 & 36 & 4 & 1 & 0 & 3 \\
		21 & 73 & 36 & 2 & 0 & 0 & 2 \\
		22 & 73 & 36 & 4 & 1 & 0 & 3 \\
		23 & 73 & 36 & 5 & 1 & 0 & 4 \\ \hline
	\end{array}
	\]

    $k=5$
	\[
	\begin{array}{|c|c|c|c|c|c|c|c|} \hline
		n & \#\,\text{potential} & f(0) \neq f(1) & \#\,\text{liftings} & \deg = 1 & \deg = 2 & \deg = 3 & \deg = 4 \\ \hline
		5 & 75165111 & 38800984 & 2815556 & 2 & 483 & 89583 & 2725488 \\
		6 & & & 13316 & 2 & 117 & 731 & 12466 \\
		7 & & & 462 & 3 & 20 & 90 & 349 \\
		8 & 36080 & 18072 & 31 & 3 & 0 & 11 & 17 \\
		9 & 18808 & 9369 & 52 & 2 & 3 & 18 & 29 \\
		10 & 17921 & 8953 & 34 & 2 & 1 & 11 & 20 \\
		11 & 17885 & 8940 & 78 & 3 & 3 & 28 & 44 \\
		12 & 17882 & 8937 & 8 & 2 & 0 & 0 & 6 \\
		13 & 17881 & 8936 & 78 & 3 & 3 & 27 & 45 \\
		14 & 17881 & 8936 & 33 & 3 & 1 & 10 & 19 \\
		15 & 17881 & 8936 & 43 & 0 & 1 & 16 & 26 \\
		16 & 17881 & 8936 & 27 & 3 & 0 & 9 & 15 \\
		17 & 17881 & 8936 & 75 & 3 & 3 & 26 & 43 \\
		18 & 17881 & 8936 & 14 & 2 & 1 & 1 & 10 \\
		19 & 17881 & 8936 & 74 & 3 & 3 & 26 & 42 \\
		20 & 17881 & 8936 & 25 & 2 & 0 & 9 & 14 \\ \hline
	\end{array}
	\]

    $k=6$, deg $\leq 2$
	\[
	\begin{array}{|c|c|c|c|c|c|} \hline
		n & \#\,\text{potential} & f(0) \neq f(1) & \#\,\text{liftings} & \deg = 1 & \deg = 2 \\ \hline
		6 & 232090 & 119232 & 4850 & 3 & 4847 \\
		7 & 136330 & 69497 & 468 & 3 & 465 \\
		8 & 41462 & 22295 & 52 & 4 & 48 \\
		9 & 21784 & 11310 & 34 & 3 & 31 \\
		10 & 17078 & 8631 & 4 & 4 & 0 \\
		11 & 16701 & 8358 & 8 & 4 & 4 \\
		12 & 16593 & 8289 & 4 & 3 & 1 \\
		13 & 16581 & 8280 & 8 & 4 & 4 \\
		14 & 16579 & 8280 & 3 & 3 & 0 \\
		15 & 16579 & 8280 & 7 & 3 & 4 \\
		16 & 16579 & 8280 & 4 & 4 & 0 \\
		17 & 16579 & 8280 & 8 & 4 & 4 \\
		18 & 16579 & 8280 & 3 & 3 & 0 \\
        19 & 16579 & 8280 & 8 & 4 & 4 \\
        20 & 16579 & 8280 & 4 & 4 & 0 \\ \hline
	\end{array}
	\]

    A representative from each of the four classes of functions of degree 2 that are liftings for $n \in \{11, 13, 15, 17, 19\}$: $x_1 \oplus x_2 \oplus x_3 \oplus x_2x_3 \oplus x_2x_5 \oplus x_2x_6 \oplus x_3x_4 \oplus x_3x_5 \oplus x_4x_5 \oplus x_4x_6 \oplus x_5x_6$, $x_1 \oplus x_2 \oplus x_3 \oplus x_5 \oplus x_2x_3 \oplus x_4x_5 \oplus x_5x_6$, $x_1 \oplus x_3 \oplus x_4 \oplus x_5 \oplus x_2x_3 \oplus x_3x_4 \oplus x_5x_6$, $x_1 \oplus x_4 \oplus x_5 \oplus x_2x_3 \oplus x_2x_4 \oplus x_2x_6 \oplus x_3x_4 \oplus x_3x_5 \oplus x_3x_6 \oplus x_4x_5 \oplus x_5x_6$

	\section{List of virtual liftings}\label{appendix:virtual}
	
	In the tables in Appendix~B and~C, the given differentials are $2^n\operatorname{DPU}$ for $n=k,k+1,\dotsc,9$. In each case, the value for $n=10, 11, 12$ is $2^{n-9}$ times the value for $n=9$.
	
	In the $\ell_n(f)$ column of the first table, $a,b$ means that $\ell_n(f)=a$ if $n\in b\Z$ and is $\ell_n(f)=1$ otherwise.
	
	The twelve virtual liftings (up to elementary equivalence) for $k \leq 5$:
	\[
	\begin{array}{|c|c|c|c|c|c|} \hline
		k & \text{Boolean function} & \ell_n(f) & \deg & \operatorname{LPU} & \text{differentials} \\ \hline
		3 & x_1 \oplus x_2(x_3\oplus 1) & 3, 2 & 2 & 1/4 & 2, 4, 8, 16, 32, 64, 128 \\
		\hline
		4 & x_1 \oplus x_2 x_3 (x_4\oplus1) & 4, 3 & 3 & 9/16 & 6, 14, 28, 56, 112, 224 \\
		\hline
		4 & x_1 \oplus x_2 (x_3\oplus1) (x_4\oplus1) & 2, 3 & 3 & 9/16 & 6, 14, 28, 56, 112, 224 \\
		\hline
		5 & x_2 \oplus x_1 (x_3 x_4 \oplus x_5 (x_3 \oplus x_4 \oplus 1)) & 4, 3 & 3 & 9/16 & 10, 24, 42, 80, 162 \\
		\hline
		5 & x_2 \oplus x_3 ((x_1 \oplus x_2) (x_4 \oplus 1) \oplus x_4 x_5 \oplus 1) & 4, 3 & 3 & 1/4 & 8, 14, 28, 56, 112 \\
		\hline
		5 & x_2 \oplus x_3 (x_1 \oplus 1) \dotsc& 4, 3 & 3 & 1/4 & 8, 14, 28, 56, 112 \\
		& \dotsc \oplus x_4 ((x_2 \oplus 1) (x_5 \oplus 1) \oplus x_3 (x_1 \oplus x_5)) & & & & \\
		\hline
		5 & x_3 \oplus x_4 (x_5 (x_2 \oplus x_3 \oplus 1) \oplus 1) \dotsc & 4, 3 & 3 & 1 & 10, 18, 44, 84, 168 \\
		& \dotsc \oplus (x_4 \oplus 1) (x_2 \oplus x_3 (x_1 \oplus x_2)) & & & & \\
		\hline
		5 & x_2 \oplus x_4 (x_5 \oplus 1) (x_1 \oplus x_3) & 2, 3 & 3 & 9/16 & 12, 24, 34, 72, 144 \\
		\hline
		5 & x_1 \oplus x_2 x_3 x_4 (x_5 \oplus 1) & 5, 4 & 4 & 49/64 & 18, 38, 78, 156, 312 \\
		\hline
		5 & x_1 \oplus x_2 x_3 (x_4 \oplus 1) (x_5 \oplus 1) & 2, 4 & 4 & 49/64 & 22, 36, 74, 148, 296 \\
		\hline
		5 & x_1 \oplus x_2 (x_3 \oplus 1) (x_4 \oplus 1) (x_5 \oplus 1) & 2, 4 & 4 & 49/64 & 18, 38, 78, 156, 312 \\
		\hline
		5 & x_1 \oplus x_2 (x_3 \oplus 1) (x_4 (x_5 \oplus 1) \oplus 1) & 3, 4 & 4 & 25/64 & 14, 24, 48, 96, 192 \\
		\hline
	\end{array}
	\]

	\section{List of proper liftings}\label{appendix:proper}
	
	The six nonlinear Boolean functions of degree $\geq 2$ with $k \leq 5$ that are proper $(k,n)$-liftings for all $n \geq k$, up to elementary equivalence:
	\[
	\begin{array}{|c|c|c|c|c|} \hline
		k & \text{Boolean function}  & \deg & \operatorname{LPU} & \text{differentials} \\ \hline
		4 & x_2 \oplus x_1 (x_3 \oplus 1) x_4 & 3 & 9/16 & 6, 14, 30, 54, 108, 216 \\
		\hline
		5 & x_2 \oplus x_1 x_3 (x_4 \oplus 1) (x_5 \oplus 1) & 4 & 49/64 & 16, 34, 72, 148, 304 \\
		\hline
		5 & x_2 \oplus x_1 (x_3 \oplus 1) (x_4 \oplus 1) x_5 & 4 & 49/64 & 22, 34, 72, 146, 286 \\
		\hline
		5 & x_2 \oplus x_1 (x_4 (x_3 \oplus 1) \oplus (x_4 \oplus 1) x_5 (x_2 \oplus x_3 \oplus 1)) & 4 & 25/64 & 8, 18, 36, 68, 132\\
		\hline
		5 & x_3 \oplus x_1 x_2 (x_4 \oplus 1) x_5 & 4 & 49/64 & 18, 40, 78, 152, 300\\
		\hline
		5 & x_3 \oplus x_1 (x_2 \oplus 1) x_4 (x_5 \oplus 1) & 4 & 49/64 & 22, 50, 74, 148, 304 \\
		\hline
	\end{array}
	\]
	 Other classes of proper liftings are described in \cite{HO-2}.

	\bibliographystyle{plain}

\begin{thebibliography}{10}

        \bibitem{keccak}
		Guido Bertoni, Joan Daemen, Micha\"el Peeters, and Gilles Van Assche.
		\newblock {\em Keccak sponge function family main document}.
		\newblock Submission to NIST (Round 2), 2009.
  
		\bibitem{carlet-book}
		Claude Carlet.
		\newblock {\em Boolean functions for cryptography and coding theory}.
		\newblock Cambridge Univ. Press, New York, 2020.


		\bibitem{carlet-APNRS}
		Claude Carlet.
		\newblock On the {APN}-ness and differential uniformity of some classes of $(n,n)$-functions over $\mathbb{F}_2^n$.
		\newblock {\em Adv. Math. Commun.}, 18(2):283--303, 2023.

        \bibitem{charpin-peng}
		Pascale Charpin and Jie Peng.
		\newblock New links between nonlinearity and differential uniformity.
		\newblock {\em Finite Fields Appl.}, 56:188--208, 2019.
        
		\bibitem{cusick}
		Thomas W.\ Cusick and Younhwan Cheon.
		\newblock Counting balanced {Boolean} functions in $n$ variables with bounded degree.
		\newblock {\em Exp. Math.}, 16(1):101--105, 2007.
		
		\bibitem{JDA-thesis}
		Joan Daemen.
		\newblock Cipher and hash function design strategies based on linear and differential cryptanalysis.
		\newblock PhD thesis, KU Leuven, 1995.

		\bibitem{DKMV}
		Joan Daemen, Dani\"el Kuijsters, Silvia Mella, and Denise Verbakel.
		\newblock Propagation properties of a non-linear mapping based on squaring in odd characteristic.
		\newblock {\em Cryptogr. Commun.}, 16, 997--1011, 2024.
  
		\bibitem{Grassi}
		Lorenzo Grassi.
		\newblock Bounded surjective quadratic functions over $\F_p^n$ for {MPC}-/{ZK}-/{FHE}-friendly symmetric primitives.
		\newblock {\em IACR Trans. on Symmetric Cryptol.}, 2023(2):94–131, 2023.

		\bibitem{HO-2}
		Jan Kristian Haugland and Tron Omland.
		\newblock New classes of reversible cellular automata.
		\newblock \url{https://arxiv.org/abs/2411.00721}, 2024
  
		\bibitem{hedlund}
		Gustav A.\ Hedlund.
		\newblock Endomorphisms and automorphisms of the shift dynamical system.
		\newblock {\em Math. Syst. Theory}, 3:320--375, 1969.


		\bibitem{kari-survey}
		Jarkko Kari.
		\newblock Theory of cellular automata: a survey.
		\newblock {\em Theoret. Comput. Sci.}, 334(1--3):3--33, 2005.

		\bibitem{Mariot-24}
		Luca Mariot.
		\newblock Insights gained after a decade of cellular automata-based
		cryptography.
		\newblock In {\em Cellular Automata and Discrete Complex Systems}, pp. 35–54. Cham: Springer Nature Switzerland, 2024.
        
  
		\bibitem{mariot2021evolutionary}
		Luca Mariot, Stjepan Picek, Domagoj Jakobovic, and Alberto Leporati.
		\newblock Evolutionary algorithms for designing reversible cellular automata.
		\newblock {\em Genet. Program. Evolv. Mach.}, 22(4):429--461, 2021.
		
		\bibitem{mariot2019cellular}
		Luca Mariot, Stjepan Picek, Alberto Leporati, and Domagoj Jakobovic.
		\newblock Cellular automata based {S}-boxes.
		\newblock {\em Cryptogr. Commun.}, 11(1):41--62, Jan 2019.
		
		\bibitem{OS-iacr}
		Tron Omland and Pantelimon Stanica.
		\newblock Permutation rotation-symmetric {S}-boxes and affine equivalence.
		\newblock \url{https://arxiv.org/abs/2203.00778}, 2022

    	\bibitem{patt}
		Yale N.\ Patt.
		\newblock Injections of neighborhood size three and four on the set of configurations from the infinite one-dimensional tessellation automata of two-state cells.
		\newblock unpublished report, 1971. 
		\newblock \url{https://apps.dtic.mil/sti/citations/AD0748072}.
  
		\bibitem{schoone-daemen}
		Jan Schoone and Joan Daemen.
		\newblock The state diagram of $\chi$.
		\newblock {\em Des. Codes Cryptogr.} 92, 1393--1421, 2024.
		
	\end{thebibliography}

\end{document}